\newtheorem{lemma}{Lemma}[section]
\newtheorem{theorem}{Theorem}[section]
\newtheorem{corollary}{Corollary}[section]
\newtheorem{example}{Example}[section]
\journal{XXX}
\begin{document}

\begin{frontmatter}
\title{Mittag-Leffler stability of complete monotonicity-preserving schemes for time-dependent coefficients sub-diffusion equations}

\tnotetext[label1]{The research of Dongling Wang is supported in part by NSFC (No. 12271463) and Outstanding Youth Foundation of Department of Education in Hunan Province (No. 22B0173).\\ Declarations of interest: none.}

\author[NWU]{Wen Dong} 
\ead{dongwen0724@163.com}
\author[XTU]{Dongling Wang\corref{mycorrespondingauthor}}
\ead{wdymath@xtu.edu.cn, ORCID:0000-0001-8509-2837}
\cortext[mycorrespondingauthor]{Corresponding author. }

\address[NWU]{School of Mathematics, Northwest University, Xi'an, Shaanxi, 710127, P.R. China}
\address[XTU]{School of Mathematics and Computational Science, Xiangtan University, Xiangtan, Hunan 411105, China}

\begin{abstract}
A key characteristic of the anomalous sub-solution equation is that the solution exhibits algebraic decay rate over long time intervals, which is often refered to the Mittag-Leffler type stability. For a class of power nonlinear sub-diffusion models with variable coefficients, we prove that their solutions have Mittag-Leffler stability when the source functions satisfy natural decay assumptions. That is the solutions have the decay rate $\|u(t)\|_{L^{s}(\Omega)}=O\left( t^{-(\alpha+\beta)/\gamma} \right)$ as $t\rightarrow\infty$, where $\alpha$, $\gamma$ are positive constants, $\beta\in(-\alpha,\infty)$ and $s\in (1,\infty)$. Then we develop the structure preserving algorithm for this type of model. For the complete monotonicity-preserving ($\mathcal{CM}$-preserving) schemes developed by Li and Wang (Commun. Math. Sci., 19(5):1301-1336, 2021), we prove that they satisfy the discrete comparison principle for time fractional differential equations with variable coefficients. Then, by carefully constructing the fine the discrete supersolution and subsolution, we obtain the long time optimal decay rate of the numerical solution $\|u_{n}\|_{L^{s}(\Omega)}=O\left( t_n^{-(\alpha+\beta)/\gamma} \right)$ as $t_{n}\rightarrow\infty$, which is fully agree with the theoretical solution. 
Finally, we validated the analysis results through numerical experiments.

\end{abstract}

\begin{keyword}
time-dependent coefficient sub-diffusion equations, $\mathcal{CM}$-preserving schemes, Mittag-Leffler type stability, discrete comparison principle.
\end{keyword}

\end{frontmatter}

\section{Introduction}\label{Introd}

A typical application of time fractional equations is to describe various anomalous diffusion models, namely sub-diffusion and super-diffusion phenomena.  For the standard diffusion process, which is usually described by the classical heat equation,  where the mean square displacement (MSD) of particles is a linear function of time, i.e., $\langle x^2(t)\rangle \sim Ct$.
\footnote{As usual, our notation $\sim$ here represents the asymptotic limit $\lim_{t\to \infty} \frac{\langle x^2(t)\rangle}{t}=C$, where $C$ represents a general positive constant, which may take different values in different places, but is always independent of $t$ or $n$. We also use the standard notation $O$ to represent a certain asymptotic order, for example $y(t)=O(t^{-\alpha})$ means $\lim_{t\to \infty} \left| \frac{y(t)}{t^{-\alpha}}\right| \leq C$.}
However, many actual model data indicate that the MSD of anomalous diffusion processes may have a form $\langle x^2(t)\rangle \sim Ct^{\alpha}$, where $\alpha\in(0, 2)$. Based on random walk theory, we can rigorously derive fractional diffusion equations, where Caputo derivatives are typically used in the time direction instead of classical first-order derivatives \cite{sub-diff2, caputo}.
Specifically, when $\alpha\in(0, 1)$, it is referred to as the sub-diffusion process.

For the time fractional order model, its solutions exhibit asymptotic behavior completely different from the standard diffusion equation over a long time interval.  
In fact, solutions to time fractional equations typically have algebraic decay rates, while standard first-order equations have exponential decay rates. This can be seen from the simple example below. Consider the fractional ODE: $D_{t}^{\alpha}y(t)=\lambda y(t)$ with $y(0)=y_0>0$,
where the Caputo fractional derivative of order $\alpha$  is defined by
$D_{t}^{\alpha}y(t):=\frac{1}{\Gamma(1-\alpha)}\int_{0}^{t}(t-s)^{-\alpha}y'(s)ds$ \cite{caputo}. 
The solution is given by $y(t)=y_{0}E_{\alpha, 1}(\lambda t^{\alpha})$, where $E_{\alpha, \beta}(t)$ is the Mittag-Leffler funciton. 
When $\lambda<0$, we have $y(t)\sim Ct^{-\alpha}$ by the asymptotic expansion formula of Mittag-Leffler funciton. 
This long-term algebraic decay rate is commonly referred to as Mittag-Leffler stability. 
Another surprising result is presented in \cite{uniq-exis12}, which shows that for the fractional ODE: $D_{t}^{\alpha}y(t)=\lambda y^{\gamma}(t)$ with $y_0>0, \gamma>0$ and $\lambda<0$, the solution has the asymptotic decay rate $y(t)\sim Ct^{-\frac{\alpha}{\gamma}}$. 

Once we have the above two results and combine appropriate energy methods, we can obtain the long-term decay rate estimation of the standard $L^s(\Omega)$-norm $(s>1)$ for the solution of linear or some nonlinear anomalous diffusion model. See for example \cite{decay1, long-decay2, uniq-exis12, decay-extin}.
Currently, most of these analyses are focused on constant coefficient equations.
However, the anomalous diffusion problems can be both constant coefficients and variable coefficients. 
Compared with constant coefficient equations, the variable coefficient equations are appropriate for modeling of anomalous diffusive processes in turbulent media. For instance, modeling sub-diffusive transport inside a fractured porous media \cite{porous-media}. 
Diffusion representation of tracer particles in heterogeneous biological ambient fluids \cite{sub-diff1}. 
In particular, the application of the time dependence coefficient is a new perspective of coupling the spatial and temporal variables, which avoids the divergence caused by the long tail of the temporal probability distribution at a fixed time \cite{sub-diff3}.
For more information on the application of variable coefficients to more complex physical and biological systems, see also \cite{sub-diff2,sub-diff3,sub-diff4}.

The aim of this paper is to study both theoretically and numerically the long time Mittag-Leffler stability of the solutions for a class of nonlinear anomalous diffusion models with time-dependent coefficients 
\begin{equation}\label{pde_f}
   \left\{\begin{aligned}
    &D_{t}^{\alpha}u(t,x)+\nu t^{\beta}\mathcal{N}(u(t,x))=f(t,x)~~for~t>0,~x\in \Omega,\\
    &u(0,x)=u_{0}(x)\geq 0~~for~x\in \Omega,\\
    &u(t,x)=0~~~for~t>0,~x\in \partial\Omega,
   \end{aligned}\right.
\end{equation}
where $0<\alpha<1$, $\beta>-\alpha$, $\nu>0$, and $\Omega\subset \mathbb{R}^{d}~(d\geq1)$ is a bounded domain with smooth boundary $\partial\Omega$.
And $\mathcal{N}(u)$ can be linear or nonlinear operator, some typical examples including the Laplace operator $\mathcal{N}(u)=-\Delta u$; the $p$-Laplace operator
$\mathcal{N}(u)=-\Delta_{p} u$ with $\Delta_{p} u=\mathrm{div}(|\nabla u|^{p-2}\nabla u)$ and the mean curvature operator $\mathcal{N}(u)=-\mathrm{div}\Big(\frac{\nabla u}{\sqrt{1+|\nabla u|^{2}}}\Big)$. 
Following \cite{decay1,uniq-exis10},we assume that $\mathcal{N}(u)$ satisfies the following structural assumption 
    \begin{equation}\label{str-ass1}
        \|w(t,\cdot)\|_{L^{s}(\Omega)}^{s-1+\gamma}\leq C_{N}\int_{\Omega}w^{s-1}(t,x)\mathcal{N}(w(t,x))dx~\text{ for }~w\in L^{s}(\Omega),
    \end{equation}
where $s\in (1,\infty)$, $\gamma\in (0,+\infty)$ and $C_{N}>0$. 
One can verify that the above example of $\mathcal{N}(u)$ indeed satisfies the elliptical structure assumption \cite{decay1,  Decay-time-coef}.
The non-zero source terms $f(t,x)$ and satisfies the decay estimate
\begin{equation}\label{f_esti}
    \|f(t)\|_{L^{s}(\Omega)}\leq \frac{K t^{\beta}}{(t+1)^{\alpha+\beta}}~~\forall t>0,~\forall s\in (1,\infty),~K>0.
\end{equation}

Let $u$ be the solution of \eqref{pde_f}. Under the above assumptions, we can establish the long-term estimation of $\|u(t, \cdot)\|_{L^{s}(\Omega)}$, i.e. Mittag-Leffler stability, by combining the energy method with the long-term estimation of time fractional ODEs.
For the homogeneous linear constant coefficient equation with $\beta=0, f=0$ and $\mathcal{N}(u)=-\Delta u$, we can apply standard eigenvalue and eigenfunction pairs  methods to obtain that $\|u(t, \cdot)\|_{L^{s}(\Omega)}\sim C t^{-\alpha}$. 
For the constant coefficient nonlinear equation with $\beta=0$ and $\mathcal{N}(u)$ satisfying \eqref{str-ass1}, it is shown that $\|u(t, \cdot)\|_{L^{s}(\Omega)}\sim C t^{-\alpha/\gamma}$; see \cite{uniq-exis10,decay1,decay-extin}. And for the homogeneous varying coefficient nonlinear equation with $f=0$ and $\mathcal{N}(u)$ satisfying \eqref{str-ass1}, the asymptotic estimate $\|u(t, \cdot)\|_{L^{s}(\Omega)}\sim C t^{-(\alpha+\beta)/\gamma}$ for weak solution is obtained in \cite{Decay-time-coef}, from which can see from that the time-dependent factor $t^{\beta}$  has a direct impact on the long-term decay rate of the solution. 

At present, most numerical methods for time fractional equations focus on the reduction in numerical convergence rate caused by weak singularity of the solution near the initial moment. Various effective techniques have been developed to restore high-order convergence of numerical methods; see \cite{liao2019discrete, Kop19, StyL1survey, jin2023numerical}.
From the perspective of the structure-preserving algorithm, we naturally hope that the numerical solution can preserve the long-term optimal decay rate of the sub-diffusion equation, that is, establish numerical Mittag-Leffler stability. At present, there are relatively few research results on qualitative analysis of numerical solutions to anomalous diffusion equations, especially long-term behavior analysis. 
For the linear time fractional evolutional equation, the numerical ML stability, i.e. $\|u_{n}\|_{L^{s}(\Omega)}\sim Ct_{n}^{-\alpha}$ is established in \cite{Mittag-leffler-ODE} through singularity analysis of the generating function of the numerical solution, where $u_{n}$ is the numerical approximation of $u$ at $t_{n}$.
For the constant coefficient nonlinear equation with $\beta=0$ and $\mathcal{N}(u)$ satisfying \eqref{str-ass1}, the numerical ML stability as $\|u_{n}\|_{L^{s}(\Omega)}\sim Ct_{n}^{-\alpha/\gamma}$ was recently established in \cite{CM-preserving}, using the discrete comparison principle and the method of constructing fine discrete sup and sub solutions.
The key to success here lies in a class of time fractional derivative structure-preserving discrete schemes, namely the $\mathcal{CM}$-preserving schemes developed in \cite{com-mono}. The discrete coefficients of this type of numerical method have many excellent properties.

The objective of this article is twofold. 
\begin{itemize}
\item We derive the long-term decay estimate for the solution of  the general non-homogeneous variable coefficient nonlinear equation \eqref{pde_f} with $\beta\neq0, f\neq0$ and $\mathcal{N}(u)$ satisfying \eqref{str-ass1} and prove that $\|u(t, \cdot)\|_{L^{s}(\Omega)}\sim C t^{-(\alpha+\beta)/\gamma}$.  Our results extend the work of \cite{Decay-time-coef, decay-extin}.

\item We establish the numerical ML stability for \eqref{pde_f} based on the $\mathcal{CM}$-preserving schemes and show that $\|u_{n}\|_{L^{s}(\Omega)}\sim C t_{n}^{-(\alpha+\beta)/\gamma}$. Firstly, we prove that the $\mathcal{CM}$-preserving schemes also satisfies the discrete comparison principle for variable coefficient equations. Then, applying this result, we construct discrete sup and sub solutions with variable coefficients to obtain the expected optimal numerical decay rate estimate. Our work in this article extend the results in \cite{CM-preserving} from homogeneous constant coefficient equations to non-homogeneous, variable coefficient  equations.
\end{itemize}

The rest of this article is organized as follows. In Section \ref{sec1}, we derive the long time decay rate of continuous solutions for \eqref{pde_f} and provide detailed proofs of the estimates.
In Section \ref{sec2}, we recall $\mathcal{CM}$-preserving schemes on uniform mesh $t_{n}=n\tau$ with time step size $\tau>0$ and their main properties.
In Section \ref{sec3}, we establish the discrete numerical ML stability for \eqref{pde_f}.  We first derive the discrete comparison principle of $\mathcal{CM}$-preserving schemes for a class of time-dependent fractional ODEs. Then, by constructing discrete sub and sup solution we show the numerical ML stability for fractional non-homogeneous, variable coefficient ODEs. Then, by establishing a variable coefficient discrete energy inequality, we obtain the estimate of the long-term decay rate of the numerical solution of \eqref{pde_f}.
Finally, we present  several typical numerical examples to illustrate and verify the optical numerical decay rate in Section \ref{sec5}.

\section{Decay estimate for nonlinear models with time-dependent coefficients}\label{sec1}
Consider the time-dependent coefficients fractional ODE, which highlight the influence of $t^{\beta}$:
\begin{equation}\label{fode}
    D_{t}^{\alpha}y(t)+\nu t^{\beta}y^{\gamma}(t)=0 \text{ with } y(0)=y_{0}>0,
\end{equation}
where $0<\alpha<1$, $\beta>-\alpha$, $\gamma>0$ and $\nu>0$. 
The decay result concerning \eqref{fode} reads as follows.

\begin{lemma}\label{l-c-decay}\cite{Decay-time-coef}
Let $y\in C(\mathbb{R}_{+})$ be the solution of \eqref{fode}. Then there exist $C_{1}, C_{2}>0$ such that
\begin{equation}\label{c-decay}
    \frac{C_{1}}{1+t^{\frac{\alpha+\beta}{\gamma}}}\leq y(t)\leq \frac{C_{2}}{1+t^{\frac{\alpha+\beta}{\gamma}}}~~for~t\geq 0.
\end{equation}
\end{lemma}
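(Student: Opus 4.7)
The plan is to sandwich $y(t)$ between explicit sub- and super-solutions of the form $C(c+t)^{-(\alpha+\beta)/\gamma}$ and to invoke a fractional comparison principle for \eqref{fode}. Since the nonlinearity $y \mapsto \nu t^\beta y^\gamma$ is monotone in $y>0$, the comparison principle for Caputo ODEs (which I would either quote or prove as a preliminary lemma via the equivalent Volterra integral formulation and a first-time-of-crossing argument) is directly applicable.

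Before building the barriers, I would first establish global positivity $y(t)>0$ for $t\ge 0$. Using the integral reformulation
\begin{equation*}
y(t)=y_{0}-\frac{\nu}{\Gamma(\alpha)}\int_{0}^{t}(t-s)^{\alpha-1}s^{\beta}y(s)^{\gamma}\diff s,
\end{equation*}
together with a continuity-contradiction argument at the hypothetical first zero $t^{\ast}>0$, rules out the solution touching zero. Combined with $D_{t}^{\alpha}y\le 0$ (from the equation) this will also give the basic bound $y(t)\le y_{0}$, enough to make the nonlinear term integrable and secure the framework.

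For the upper bound I set $p:=(\alpha+\beta)/\gamma$ and try $\bar y(t):=C_{2}(1+t)^{-p}$ with $C_{2}\ge y_{0}$. The quantity to control is
\begin{equation*}
D_{t}^{\alpha}\bar y(t)+\nu t^{\beta}\bar y(t)^{\gamma}
=C_{2}\,D_{t}^{\alpha}\!\bigl[(1+t)^{-p}\bigr]+\nu C_{2}^{\gamma}\,t^{\beta}(1+t)^{-(\alpha+\beta)}.
\end{equation*}
The key technical computation is a uniform estimate of $D_{t}^{\alpha}[(1+t)^{-p}]$: splitting the defining integral $-\frac{p}{\Gamma(1-\alpha)}\int_{0}^{t}(t-s)^{-\alpha}(1+s)^{-p-1}\diff s$ into a neighborhood of $s=t$ and its complement (or rescaling $s=t\sigma$ and separating the boundary-layer contribution near $\sigma=0$) yields the asymptotic $D_{t}^{\alpha}[(1+t)^{-p}]\sim -c_{p,\alpha}\,t^{-\alpha}$ as $t\to\infty$, with a uniform bound on $[0,1]$. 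Matching this against $t^{\beta}(1+t)^{-(\alpha+\beta)}\asymp (1+t)^{-\alpha}$, the super-solution inequality reduces to the algebraic condition $\nu C_{2}^{\gamma-1}\ge c_{p,\alpha}$, which can be secured by choosing $C_{2}$ large enough (depending on $\alpha,\beta,\gamma,\nu,y_{0}$).

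For the lower bound I use the two-parameter ansatz $\underline y(t):=C_{1}(c_{0}+t)^{-p}$ with constants $C_{1},c_{0}>0$; the extra shift $c_{0}$ is essential because a single multiplicative parameter cannot simultaneously satisfy the opposite-sign differential inequality and the initial constraint $\underline y(0)\le y_{0}$, especially when $\gamma<1$. A parallel asymptotic analysis of $D_{t}^{\alpha}[(c_{0}+t)^{-p}]$ then reduces the sub-solution requirement to an algebraic condition on $C_{1},c_{0}$, which can be met by first taking $c_{0}$ large and then $C_{1}$ appropriately. Applying the comparison principle to $(\underline y,y)$ and $(y,\bar y)$, and absorbing the comparability of $(1+t)^{-p}$ and $(c_{0}+t)^{-p}$ with $(1+t^{p})^{-1}$ into the constants, yields \eqref{c-decay}. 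The main obstacle is the sharp uniform-in-$t$ estimate of the Caputo derivative of the barrier functions and the joint tuning of $(C_{1},c_{0})$ in the regime $\gamma<1$; once those are in hand the rest is routine comparison.
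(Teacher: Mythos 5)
Your overall strategy---positivity, explicit power-type barriers, and the fractional comparison principle---is the same one the paper uses (the lemma itself is quoted from the cited reference, but the paper's proof of its non-homogeneous extension, Theorem~\ref{t-nonli-f}, follows exactly this route). The gap is in your choice of barriers. A globally smooth ansatz $C(c+t)^{-p}$ has $\tfrac{d}{dt}\bigl[C(c+t)^{-p}\bigr]\to -pCc^{-p-1}\neq 0$ as $t\to 0^{+}$, hence $D_{t}^{\alpha}\bigl[C(c+t)^{-p}\bigr]\sim -\mathrm{const}\cdot t^{1-\alpha}$ near $t=0$, whereas the reaction term $\nu t^{\beta}(\cdot)^{\gamma}$ is of order $t^{\beta}$ there. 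The one-sided inequalities you need therefore hinge on comparing the exponents $1-\alpha$ and $\beta$: the supersolution inequality $D_{t}^{\alpha}\bar y+\nu t^{\beta}\bar y^{\gamma}\ge 0$ fails on a right neighborhood of $0$ whenever $\beta>1-\alpha$ (the negative $t^{1-\alpha}$ term dominates), and the subsolution inequality $D_{t}^{\alpha}\underline y+\nu t^{\beta}\underline y^{\gamma}\le 0$ fails there whenever $\beta<1-\alpha$ (the positive $t^{\beta}$ term dominates); note the constant-coefficient case $\beta=0$ already exhibits the latter failure. Since $\beta$ ranges over all of $(-\alpha,\infty)$, at least one of your two barriers breaks down near the origin for every $\beta\neq 1-\alpha$, and your remark about ``a uniform bound on $[0,1]$'' glosses over exactly this. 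This is why the reference and the paper use \emph{piecewise} barriers: the subsolution starts as $y_{0}-\mathrm{const}\cdot t^{\alpha+\beta}$, whose Caputo derivative is exactly a multiple of $t^{\beta}$ and so matches the power of the reaction term, and the supersolution is constant on $[0,t_{0}]$, where its Caputo derivative vanishes identically, the pure power tail being attached only after a sufficiently late switching time.

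There is a second, independent error in your upper bound. You reduce the supersolution requirement at large $t$ to $\nu C_{2}^{\gamma-1}\ge c_{p,\alpha}$ and claim it is ``secured by choosing $C_{2}$ large enough.'' That is true only for $\gamma>1$; for $\gamma<1$ the map $C_{2}\mapsto C_{2}^{\gamma-1}$ is decreasing, so the condition forces $C_{2}\le(\nu/c_{p,\alpha})^{1/(1-\gamma)}$, which is incompatible with the initial constraint $C_{2}\ge y_{0}$ when $y_{0}$ is large (and for $\gamma=1$ it degenerates into a condition on $\nu$ alone). You correctly identify this tension for the subsolution and introduce the shift $c_{0}$ there, but the same cure is needed on the other side: with $\bar y=C_{2}(c+t)^{-p}$ and $C_{2}=y_{0}c^{p}$ the large-$t$ condition becomes $\nu y_{0}^{\gamma-1}c^{p\gamma}\ge c_{p,\alpha}$, which is satisfiable by taking $c$ large---this is precisely the role of the plateau length $t_{0}$ in the paper's construction, where the switching time is chosen so that $t_{0}^{\alpha+\beta}\gtrsim y_{0}^{1-\gamma}/\nu$. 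Even with that repair, the near-origin obstruction of the first paragraph remains, so the smooth two-parameter ansatz must ultimately be replaced by the piecewise barriers; once that is done, the rest of your argument (comparison principle, absorbing the comparability of $(c+t)^{-p}$ with $(1+t^{p})^{-1}$ into the constants) is sound.
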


Note that when $\beta=0$, the fractional ODE \eqref{fode} is reduced to the power nonlinear equation and its solution has decay rate $y(t)\sim Ct^{-\alpha/\gamma}$ \cite{uniq-exis10}. Further, when $\beta=0$ and $\gamma=1$,  it becomes fractional linear ODE and the solution has the decay rate $y(t)\sim t^{-\alpha}$.
Our first result below is to extend the estimation of the long-term decay rate of homogeneous equation \eqref{fode} to non-homogeneous cases with source terms.

The proof the Mittag-Leffler stability of fractional ODEs in \cite{uniq-exis10, Decay-time-coef} is mainly based on the comparison principle 
and the construction of appropriate barrier functions.

\begin{lemma}[Fractional comparison principle]\cite{uniq-exis10}\label{lem-conti}
Let $T>0$ and $g\in C(\mathbb{R})$. Assume that $g$ is nondecreasing. Suppose that $u$, $w\in H_{1}^{1}([0,T])$ satisfy $u(0)\leq w(0)$ and
\begin{equation*}
    \begin{aligned}
        D_{t}^{\alpha}(u-u(0))+g(u)&\leq 0,~a.a.~t\in (0,T),\\
        D_{t}^{\alpha}(w-w(0))+g(w)&\geq 0,~a.a.~t\in (0,T).
    \end{aligned}
\end{equation*}
Then $u(t)\leq w(t)$ for all $t\in[0,T]$.   
\end{lemma}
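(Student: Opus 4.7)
The plan is to reduce the two differential inequalities to a single inequality on the difference $v:=u-w$, then apply a convex chain rule for the Caputo derivative to $F(v)=\tfrac12(v^+)^2$, and finally invert by the Riemann--Liouville integral $I^\alpha$ using its positivity-preservation. The intuition is the classical heat-equation weak maximum principle, adapted to the nonlocal-in-time setting by replacing $\partial_t$-monotonicity of squared positive parts with the fractional analogue.

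First I would set $v:=u-w$, so that $v(0)\le 0$. Because the Caputo derivative annihilates constants, subtracting the two hypotheses gives
\begin{equation*}
D_t^\alpha v(t)\;\le\;g(w(t))-g(u(t))\qquad \text{for a.a. } t\in(0,T).
\end{equation*}
Next I would invoke the Alikhanov-type fractional chain rule: for any $C^1$ convex $F$ and sufficiently regular $v$,
\begin{equation*}
D_t^\alpha F(v(t))\;\le\;F'(v(t))\,D_t^\alpha v(t)\qquad \text{for a.a. } t\in(0,T).
\end{equation*}
Applied to the convex, $C^{1,1}$ function $F(x)=\tfrac12(x^+)^2$ with $F'(x)=x^+$, this yields
\begin{equation*}
D_t^\alpha\!\left(\tfrac12(v^+(t))^2\right)\;\le\;v^+(t)\bigl(g(w(t))-g(u(t))\bigr).
\end{equation*}

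Now I would use the monotonicity of $g$: on $\{v>0\}=\{u>w\}$ we have $g(u)\ge g(w)$, so $v^+(g(w)-g(u))\le 0$, while on $\{v\le 0\}$ the factor $v^+$ vanishes. Hence the right-hand side is non-positive a.e., and consequently $D_t^\alpha\bigl(\tfrac12(v^+)^2\bigr)\le 0$. Since $v(0)\le 0$ implies $(v^+(0))^2=0$, applying $I^\alpha$ (which preserves non-positivity and inverts $D_t^\alpha$ up to the initial value) to both sides gives $\tfrac12(v^+(t))^2\le 0$, forcing $v^+\equiv 0$, i.e.\ $u\le w$ on $[0,T]$.

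The main obstacle is rigorously justifying the fractional chain rule at the $H_1^1$ regularity level, because $F'(x)=x^+$ is only Lipschitz (not $C^1$) at the origin. The standard fix is to regularize by a family $F_\varepsilon\in C^2$ of convex approximations with $F_\varepsilon\to F$ and $F_\varepsilon'\to F'$ uniformly on compacts, establish the inequality for each $F_\varepsilon$ via the convolution representation of $D_t^\alpha$, and pass to the limit $\varepsilon\to 0$ using dominated convergence (with the kernel $(t-s)^{-\alpha}\in L^1_{\mathrm{loc}}$ and $v'\in L^1$ as dominating integrand). A secondary technical point is that the inequalities hold only a.a.\ in $t$, so one first obtains $u\le w$ a.e.\ on $[0,T]$ and then invokes continuity of $u,w$ (which follows from $H_1^1\hookrightarrow C([0,T])$ together with the fractional integral representation) to upgrade to the pointwise conclusion.
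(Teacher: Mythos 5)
The paper does not prove this lemma; it is quoted verbatim from the cited reference, and your argument is essentially the standard proof from that source: the convexity (chain-rule) inequality for the Caputo derivative applied to $F(v)=\tfrac12\bigl((u-w)^+\bigr)^2$, followed by monotonicity of $g$ and positivity of the Riemann--Liouville integral $I^\alpha$. The argument is correct; the only point worth making explicit is that the chain rule should be used in the initial-value form $D_t^\alpha\bigl(F(v)-F(v(0))\bigr)\le F'(v)\,D_t^\alpha\bigl(v-v(0)\bigr)$, which matches the hypotheses as stated and causes no trouble here since $v(0)=u(0)-w(0)\le 0$ gives $F(v(0))=0$.
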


It is noted that in \eqref{fode}, the nonlinear function $g(y):=\nu t^{\beta}y^{\gamma}$ is positive and nondecreasing, which allows the utilization of Lemma \ref{lem-conti} to prove Lemma \ref{l-c-decay}.  Now we extend Lemma  \ref{l-c-decay} to non-homogeneous cases with source terms.

\begin{theorem}\label{t-nonli-f}
   Consider the fractional ODE: $D_{t}^{\alpha}\varphi(t)+\nu t^{\beta}\varphi^{\gamma}(t)=f(t)$ with $\varphi(0)=\varphi_{0}>0$ and assume that $f$ satisfies $|f|<Kt^{\beta}/(t+1)^{\alpha+\beta}$ for some $K>0$. 
    Then there exists constants $\breve{C}_{1}$, $\breve{C}_{2}>0$ such that the solution $\varphi$ satisfies
    \begin{equation}\label{c-decay-f}
    \frac{\breve{C}_{1}}{1+t^{\frac{\alpha+\beta}{\gamma}}}\leq \varphi(t)\leq \frac{\breve{C}_{2}}{1+t^{\frac{\alpha+\beta}{\gamma}}}~~for~t\geq 0.
\end{equation}
\end{theorem}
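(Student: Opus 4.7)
The plan is to apply the fractional comparison principle (Lemma~\ref{lem-conti}, whose straightforward extension to $t$-dependent but $y$-monotone nonlinearities $G(y,t)=\nu t^{\beta}y^{\gamma}-f(t)$ still gives a valid comparison since $\partial_{y}G\geq 0$), sandwiching $\varphi$ between explicit super- and sub-solutions of the form $\bar\varphi(t)=M/(1+t^{(\alpha+\beta)/\gamma})$ and $\underline\varphi(t)=m/(1+t^{(\alpha+\beta)/\gamma})$. This mirrors the proof of Lemma~\ref{l-c-decay} in \cite{Decay-time-coef}; the only new ingredient is that the barriers must also absorb the source $f$ whose size is bounded by $K t^{\beta}/(t+1)^{\alpha+\beta}$.

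For the upper bound I would verify that $\bar\varphi$ is a supersolution: $\bar\varphi(0)=M\geq\varphi_{0}$ for $M$ large, and $D_{t}^{\alpha}\bar\varphi+\nu t^{\beta}\bar\varphi^{\gamma}\geq f$ on $(0,\infty)$. The key technical step is the one-sided estimate
\begin{equation*}
\bigl|D_{t}^{\alpha}\bar\varphi(t)\bigr|\leq C_{*}\,M\,\frac{t^{\beta}}{(t+1)^{\alpha+\beta}},
\end{equation*}
obtained by writing the Caputo integral explicitly and splitting it at $t/2$, using monotonicity of $\bar\varphi$ on the near-diagonal piece. Combined with the lower bound $\nu t^{\beta}\bar\varphi^{\gamma}(t)\geq c_{*}\nu M^{\gamma}t^{\beta}/(t+1)^{\alpha+\beta}$ (which uses the elementary equivalence $(1+t^{(\alpha+\beta)/\gamma})^{\gamma}\asymp (1+t)^{\alpha+\beta}$), I choose $M$ large enough that $c_{*}\nu M^{\gamma}\geq C_{*}M+K$, and Lemma~\ref{lem-conti} then delivers $\varphi(t)\leq\bar\varphi(t)$.

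For the lower bound the geometry is more delicate: I need $D_{t}^{\alpha}\underline\varphi+\nu t^{\beta}\underline\varphi^{\gamma}\leq f$, and since $f$ can be as negative as $-K t^{\beta}/(t+1)^{\alpha+\beta}$ the Caputo term must be negative enough to both kill the positive nonlinear contribution and dominate the worst-case source. This rests on a matching \emph{lower} bound
\begin{equation*}
D_{t}^{\alpha}\underline\varphi(t)\leq -\tilde c_{*}\,m\,\frac{t^{\beta}}{(t+1)^{\alpha+\beta}},
\end{equation*}
again obtained by splitting the Caputo integral and extracting the leading decreasing part. Because the nonlinear contribution scales as $m^{\gamma}$ while the Caputo term scales linearly in $m$, an appropriate choice of $m\in(0,\varphi_{0}]$ satisfying $\tilde c_{*} m \geq c_{*}^{-1}\nu m^{\gamma}+K$ closes the argument via the comparison principle.

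The main obstacle is establishing the two-sided sharp Caputo estimates for the barrier $1/(1+t^{(\alpha+\beta)/\gamma})$ with the correct time profile $t^{\beta}/(t+1)^{\alpha+\beta}$ uniformly across small, intermediate, and large times; the lower bound in particular requires careful handling of cancellations in the integral $\int_{0}^{t}(t-s)^{-\alpha}\underline\varphi'(s)\diff s$ so that positive constants can be extracted uniformly in $t$, together with the routine but fiddly scaling check that $(1+t^{(\alpha+\beta)/\gamma})^{\gamma}$ and $(1+t)^{\alpha+\beta}$ are comparable.
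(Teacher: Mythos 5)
Your overall strategy (sandwich $\varphi$ between explicit barriers and invoke the comparison principle of Lemma~\ref{lem-conti}) is the right one and is the one the paper uses, but the specific barriers you propose do not work, and the obstacle you flag at the end is in fact fatal rather than ``routine but fiddly''. Write $p=(\alpha+\beta)/\gamma$ and $\bar\varphi(t)=M(1+t^{p})^{-1}$. Near $t=0$ one has $\bar\varphi'(s)\sim -Mp\,s^{p-1}$, hence
\begin{equation*}
D_{t}^{\alpha}\bar\varphi(t)\;\sim\;-\,c\,M\,t^{\,p-\alpha},\qquad t\to 0^{+},
\end{equation*}
while your claimed bound is $O\!\left(M t^{\beta}\right)$ there. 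Since $p-\alpha\ge\beta$ iff $\gamma\le 1$, the estimate $|D_{t}^{\alpha}\bar\varphi|\lesssim Mt^{\beta}/(1+t)^{\alpha+\beta}$ is false near the origin whenever $\gamma>1$; worse, in that regime $-cMt^{p-\alpha}+\nu M^{\gamma}t^{\beta}$ is dominated by the negative term as $t\to0^{+}$, so $\bar\varphi$ is simply not a supersolution on any interval $(0,\delta)$, no matter how large $M$ is. Dually, your lower bound $D_{t}^{\alpha}\underline\varphi\le-\tilde c_{*}mt^{\beta}/(1+t)^{\alpha+\beta}$ requires $p-\alpha\le\beta$, i.e.\ $\gamma\ge1$, so at least one of your two key estimates fails whenever $\gamma\ne1$. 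Independently of this, the closing algebraic conditions do not close: for $\gamma<1$ the requirement $c_{*}\nu M^{\gamma}\ge C_{*}M+K$ fails for all large $M$ (and need not hold for any $M\ge\varphi_{0}$ when $K$ is large), and for the subsolution $\tilde c_{*}m\ge c_{*}^{-1}\nu m^{\gamma}+K$ fails as $m\to0$, so a single amplitude parameter cannot absorb both the small-time and large-time constraints simultaneously.

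This is exactly why the paper does not use a single smooth profile: it takes \emph{piecewise} barriers with a tunable switching time. The supersolution is constant ($w\equiv\varphi_{0}$) on $[0,t_{0}]$, so $D_{t}^{\alpha}w=0$ there and the inequality reduces to $\nu t^{\beta}\varphi_{0}^{\gamma}\ge Kt^{\beta}$ (absorbed by $\varphi_{0}^{\gamma}\ge K/\nu$), while for $t>t_{0}$ the tail $\varphi_{0}t_{0}^{p}t^{-p}$ carries the extra free parameter $t_{0}$: enlarging $t_{0}$ boosts the nonlinear term $\nu t^{\beta}w^{\gamma}=\nu\varphi_{0}^{\gamma}t_{0}^{\alpha+\beta}t^{-\alpha}$ relative to the Caputo term, uniformly in the sign of $\gamma-1$. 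The subsolution is likewise pieced together from a short decreasing polynomial segment and a $t^{-p}$ tail, with the matching point $\epsilon$ and the amplitude $C_{*}$ chosen from the data. To repair your argument you would need either to restrict to $\gamma=1$ or to replace your barriers by such piecewise ones; as written, the two ``main obstacle'' estimates you defer are precisely the points where the proof breaks.
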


Our main idea of proof is as follows: by constructing new fine upper and lower solutions, we can control the source function $f$ and obtain inequality of the corresponding homogeneous equation, which can be applied to Lemma \ref{lem-conti}.

\begin{proof}
    By the assumption $|f|<Kt^{\beta}/(t+1)^{\alpha+\beta}$, we have 
    \begin{equation}\label{sol-f-ineq}
        \frac{-Kt^{\beta}}{(t+1)^{\alpha+\beta}}\leq D_{t}^{\alpha}\varphi(t)+\nu t^{\beta}\varphi^{\gamma}(t)\leq \frac{Kt^{\beta}}{(t+1)^{\alpha+\beta}}.
    \end{equation}
   Define $\varphi_{0}^{\gamma}=\max \left\{ K/\nu, (4\nu\Gamma(1-\alpha))^{\gamma/(1-\gamma)} \right\}$. \\
    
\textbf{I. Construct a subsolution.} Consider the function 
    \begin{equation}
      u(t)= \left\{\begin{aligned}
            &\varphi_{0}- \frac{ 2\nu\varphi_{0}^{\gamma}\Gamma(1+\beta) } {\Gamma(1+\alpha+\beta)} t^{\alpha+\beta}~~for~t\in[0,\epsilon],\\
            &C_{*}\varphi_{0}\epsilon^{\frac{\alpha+\beta}{\gamma}}t^{-\frac{\alpha+\beta}{\gamma}}~~for~t> \epsilon,
        \end{aligned}\right.
    \end{equation}
    where 
    \begin{equation*}
        \epsilon=\left( \frac{(1-C_{*})\varphi_{0}^{1-\gamma}\Gamma(1+\alpha+\beta)}{2\nu \Gamma(1+\beta)} \right)^{1/(\alpha+\beta)}, 
        ~C_{*}=\min \left\{ \left(\frac{\Gamma(1+\beta)}{\Gamma(1+\alpha+\beta)\Gamma(1-\alpha)}\right)^{\frac{1}{\gamma}}, 2^{-1} \right\}.
    \end{equation*}
    
Using the definition of $u(t)$, it is easy to find that $v$ is monotonically decreasing. 
Now we show that $u(t)$ is a sub solution for $\varphi$ over the interval $[0,\epsilon]\cup (\epsilon,\infty)$. 
For $t\in [0,\epsilon]$, by noting $\varphi_{0}^{\gamma}\geq K/\nu$ and $\nu,~\varphi_{0}>0$, we have 
    \begin{equation*}\label{v-est-1}
        \begin{aligned}
            D_{t}^{\alpha}u(t) +\nu t^{\beta}u^{\gamma}(t)&=-2\nu t^{\beta}\varphi_{0}^{\gamma}+\nu t^{\beta}\left(\varphi_{0}- \frac{2\nu\varphi_{0}^{\gamma}\Gamma(1+\beta)}{\Gamma(1+\alpha+\beta)}t^{\alpha+\beta }\right)^{\gamma}\\
            &\leq-\nu t^{\beta}\varphi_{0}^{\gamma}\leq-t^{\beta}K\leq-\frac{t^{\beta}K}{(t+1)^{\alpha+\beta}}.
        \end{aligned}
    \end{equation*}

    On the other hand, for $t>\epsilon$ we have $v'\leq 0$.  Therefor, 
    \begin{equation*}
        \begin{aligned}
            D_{t}^{\alpha}u(t) +\nu t^{\beta}u^{\gamma}(t)&\leq\frac{1}{\Gamma(1-\alpha)}\int_{0}^{\epsilon}(t-s)^{-\alpha}v'(s)ds+\nu t^{\beta}C_{*}^{\gamma}\varphi_{0}^{\gamma}\epsilon^{\alpha+\beta}t^{-(\alpha+\beta)}\\
            &\leq -\frac{2\nu\Gamma(1+\beta)}{\Gamma(1+\alpha+\beta)\Gamma(1-\alpha)}\varphi_{0}^{\gamma}t^{-\alpha}\epsilon^{\alpha+\beta}+\nu t^{-\alpha}C_{*}^{\gamma}\varphi_{0}^{\gamma}\epsilon^{\alpha+\beta}\\
            &=-\nu t^{-\alpha}\varphi_{0}^{\gamma} \epsilon^{\alpha+\beta}\left(\frac{2\Gamma(1+\beta)}{\Gamma(1+\alpha+\beta)\Gamma(1-\alpha)}-C_{*}^{\gamma}\right).
        \end{aligned}
    \end{equation*}  
   By the definitions of $\epsilon$, $C_{*}$ and noting that $\varphi_{0}>[4\nu \Gamma(1-\alpha)]^{1/(1-\gamma)}$, we confirm that 
    \begin{equation*}
        \epsilon^{\alpha+\beta}\left(\frac{2\Gamma(1+\beta)}{\Gamma(1+\alpha+\beta)\Gamma(1-\alpha)}-C_{*}^{\gamma}\right) \geq \epsilon^{\alpha+\beta}\frac{\Gamma(1+\beta)}{\Gamma(1+\alpha+\beta)\Gamma(1-\alpha)}=\frac{(1-C_{*})\varphi_{0}^{1-\gamma}}{2\nu \Gamma(1-\alpha)}\geq\frac{\varphi_{0}^{1-\gamma}}{4\nu \Gamma(1-\alpha)}\geq1,
    \end{equation*}
    then 
    \begin{equation}\label{v-ineq}
        D_{t}^{\alpha}u(t) +\nu t^{\beta}u^{\gamma}(t)\leq-\nu t^{-\alpha}\varphi_{0}^{\gamma}=-\nu t^{\beta}t^{-(\alpha+\beta)}\varphi_{0}^{\gamma}\leq-\frac{t^{\beta}K}{(t+1)^{\alpha+\beta}}.
    \end{equation}
    Hence, combining \eqref{sol-f-ineq}, \eqref{v-est-1} and \eqref{v-ineq}, we have 
    \begin{equation*}
        D_{t}^{\alpha}u(t) +\nu t^{\beta}u^{\gamma}(t)\le -\frac{Kt^{\beta}}{(t+1)^{\alpha+\beta}}  \leq D_{t}^{\alpha}\varphi(t) +\nu t^{\beta}\varphi^{\gamma}(t) \text{ for all } t>0.
    \end{equation*}

    \textbf{II. Construct a supersolution.} Define $t_{0}>0$ by means of 
    \begin{equation*}
        t_{0}:= \left[\frac{\varphi_{0}^{1-\gamma}2^{\alpha+1}}{\nu}\left(\frac{1}{\Gamma(1-\alpha)}+\frac{\alpha+\beta}{\gamma\Gamma(2-\alpha)}\right)\right]^{1/(\alpha+\beta)}.
    \end{equation*}
    Let us consider the monotonically decreasing function 
    \begin{equation}
       w(t)= \left\{\begin{aligned}
            &\varphi_{0}~~for~t\in[0,t_{0}],\\
            &\varphi_{0}t_{0}^{\frac{\alpha+\beta}{\gamma}}t^{-\frac{\alpha+\beta}{\gamma}}~~for~t> t_{0}.
        \end{aligned}\right.
    \end{equation}
    
    For $0<t<t_{0}$, we have
    \begin{equation}\label{w-est-1}
        D_{t}^{\alpha}w(t) +\nu t^{\beta}w^{\gamma}(t)=\nu t^{\beta}\varphi_{0}^{\gamma}>Kt^{\beta}\geq \frac{Kt^{\beta}}{(t+1)^{\alpha+\beta}}.
    \end{equation}
    
    Next, observe that for $t\in [t_{0},2t_{0}]$, we may thus estimate as follows
    \begin{equation*}
        \begin{aligned}
            D_{t}^{\alpha}w(t) +\nu t^{\beta}w^{\gamma}(t)&=-\frac{\varphi_{0}}{\Gamma(1-\alpha)}\frac{\alpha+\beta}{\gamma}t_{0}^{\frac{\alpha+\beta}{\gamma}}\int_{t_{0}}^{t}(t-s)^{-\alpha}s^{-\frac{\alpha+\beta}{\gamma}-1}ds+\nu t^{\beta}\varphi_{0}^{\gamma}t_{0}^{\alpha+\beta}t^{-(\alpha+\beta)}\\
            &\geq -\frac{\varphi_{0}}{\Gamma(2-\alpha)}\frac{\alpha+\beta}{\gamma}t_{0}^{-\alpha}+\frac{\nu}{2} \varphi_{0}^{\gamma}t_{0}^{\alpha+\beta}t^{-\alpha}+\frac{\nu}{2} t^{\beta}\varphi_{0}^{\gamma}t_{0}^{\alpha+\beta}t^{-(\alpha+\beta)}\\
            &\geq \left(-\frac{\alpha+\beta}{\gamma}\frac{2^{\alpha}\varphi_{0}}{\Gamma(2-\alpha)}+\frac{\nu}{2} \varphi_{0}^{\gamma}t_{0}^{\alpha+\beta}\right)t^{-\alpha}+\frac{\frac{1}{2}Kt_{0}^{\alpha+\beta}t^{\beta}}{(t+1)^{\alpha+\beta}}.\\
        \end{aligned}
    \end{equation*}
 It follows from the definition $t_0$ that $\left(-\frac{\alpha+\beta}{\gamma}\frac{2^{\alpha}\varphi_{0}}{\Gamma(2-\alpha)}+\frac{\nu}{2} \varphi_{0}^{\gamma}t_{0}^{\alpha+\beta}\right)\geq 0$ and $\frac{\frac{1}{2}Kt_{0}^{\alpha+\beta}t^{\beta}}{(t+1)^{\alpha+\beta}}\geq \frac{Kt^{\beta}}{(t+1)^{\alpha+\beta}}$. 
 Hence, we have $D_{t}^{\alpha}w(t) +\nu t^{\beta}w^{\gamma}(t)\geq \frac{Kt^{\beta}}{(t+1)^{\alpha+\beta}}$ for $t\in [t_{0},2t_{0}]$.

    Finally, suppose that $t>2t_{0}$, we obtain
     \begin{equation*}
        \begin{aligned}
            D_{t}^{\alpha}w(t) +\nu t^{\beta}w^{\gamma}(t)&=-\frac{\varphi_{0}(\alpha+\beta)}{\gamma\Gamma(1-\alpha)}t_{0}^{\frac{\alpha+\beta}{\gamma}}t^{-\alpha-\frac{\alpha+\beta}{\gamma}}\int_{\frac{t_{0}}{t}}^{1}(1-s)^{-\alpha}s^{-\frac{\alpha+\beta}{\gamma}-1}ds+\nu t^{\beta}\varphi_{0}^{\gamma}t_{0}^{\alpha+\beta}t^{-(\alpha+\beta)}\\
            &=-\frac{\varphi_{0}(\alpha+\beta)}{\gamma\Gamma(1-\alpha)}t_{0}^{\frac{\alpha+\beta}{\gamma}}t^{-\alpha-\frac{\alpha+\beta}{\gamma}}\left(\int_{\frac{t_{0}}{t}}^{\frac{1}{2}}\cdots+\int_{\frac{1}{2}}^{1}\cdots\right)+\nu t^{\beta}\varphi_{0}^{\gamma}t_{0}^{\alpha+\beta}t^{-(\alpha+\beta)}\\
            &\geq-\varphi_{0}t_{0}^{\frac{\alpha+\beta}{\gamma}}t^{-\alpha-\frac{\alpha+\beta}{\gamma}}\left(\frac{1}{\Gamma(1-\alpha)}2^{\alpha}t_{0}^{-\frac{\alpha+\beta}{\gamma}}t^{\frac{\alpha+\beta}{\gamma}}+\frac{\alpha+\beta}{\gamma}2^{\frac{\alpha+\beta}{\gamma}+\alpha}\frac{1}{\Gamma(2-\alpha)}\right)+\nu t^{\beta}\varphi_{0}^{\gamma}t_{0}^{\alpha+\beta}t^{-(\alpha+\beta)}\\
            &\geq \left(-\frac{2^{\alpha}\varphi_{0}}{\Gamma(1-\alpha)}-\frac{\alpha+\beta}{\gamma}\frac{2^{\alpha}\varphi_{0}}{\Gamma(2-\alpha)}+\frac{\nu}{2} \varphi_{0}^{\gamma}t_{0}^{\alpha+\beta}\right)t^{-\alpha}+\frac{\frac{1}{2}Kt_{0}^{\alpha+\beta}t^{\beta}}{(t+1)^{\alpha+\beta}}.
        \end{aligned}
    \end{equation*}    
    By definition of $t_{0}$,  we also have 
    $\left(-\frac{2^{\alpha}\varphi_{0}}{\Gamma(1-\alpha)}-\frac{\alpha+\beta}{\gamma}\frac{2^{\alpha}\varphi_{0}}{\Gamma(2-\alpha)}+\frac{\nu}{2} \varphi_{0}^{\gamma}t_{0}^{\alpha+\beta}\right)\geq 0$, which yields
    $D_{t}^{\alpha}w(t) +\nu t^{\beta}w^{\gamma}(t)\geq \frac{Kt^{\beta}}{(t+1)^{\alpha+\beta}}$ for $t>2 t_{0}$.
In summary, we get that
    \begin{equation*}
        D_{t}^{\alpha}\varphi(t) +\nu t^{\beta}\varphi^{\gamma}(t)\le \frac{Kt^{\beta}}{(t+1)^{\alpha+\beta}} \leq D_{t}^{\alpha}w(t) +\nu t^{\beta}w^{\gamma}(t) \text{ for all }t\geq0.
    \end{equation*}

The fractional comparison principle of Lemma \ref{lem-conti} shows that $u(t)\le \varphi(t)\leq w(t)$ for all $t\ge 0$, which gives the required decay rate of equation \eqref{c-decay-f}.
\end{proof}

As an immediate application of Theorem \ref{t-nonli-f}, we can derive the long-term decay rate for $\|u(x, t)\|_{L^{s}(\Omega)}$ for the solution of the time fractional PDE model \eqref{f_esti} by combination of appropriate energy estimate.

\begin{corollary}\label{c-decay-f-N}
   Let $s>1$, $T>0$ and $\Omega\subset\mathbb{R}^{N}$. Let $u(t,x)\in L^{s}([0,T];L^{s}(\Omega))$ be a nonnegative weak solution of the model \eqref{pde_f}. 
   Assume that the nonlinear operator $\mathcal{N}[u]$ satisfies the structural assumption \eqref{str-ass1} and $f(t,x)$ satisfies decay estimate \eqref{f_esti}. 
    Then there exists a positive constant $C$ such that
    \begin{equation}\label{Dpde}
        \|u\|_{L^{s}(\Omega)}(t)\leq \frac{C}{1+t^{\frac{\alpha+\beta}{\gamma}}}~\text{ for all }~t>0.
    \end{equation}
\end{corollary}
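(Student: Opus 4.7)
The plan is to reduce the PDE estimate to the scalar fractional inequality treated in Theorem~\ref{t-nonli-f}, by means of an $L^s$ energy estimate combined with a fractional convex chain rule, and then to invoke the supersolution already constructed in that theorem.

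First I would test \eqref{pde_f} against $u^{s-1}$ and integrate over $\Omega$; the homogeneous Dirichlet boundary condition eliminates the boundary terms, yielding
\[
\int_\Omega u^{s-1}\,D_t^\alpha u\,dx+\nu t^\beta\int_\Omega u^{s-1}\,\mathcal{N}(u)\,dx=\int_\Omega u^{s-1}f\,dx.
\]
The structural assumption \eqref{str-ass1} bounds the second term from below by $(\nu/C_N)\,t^\beta\|u\|_{L^s(\Omega)}^{s-1+\gamma}$, and Hölder's inequality in the conjugate pair $(s,s/(s-1))$ gives the upper bound $\|u\|_{L^s(\Omega)}^{s-1}\|f\|_{L^s(\Omega)}$ for the right-hand side. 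The crucial ingredient is a fractional convex chain rule applied to the functional $\Phi(u)=\|u\|_{L^s(\Omega)}$, which is convex on $L^s(\Omega)$ and whose Fréchet derivative at nonnegative $u\not\equiv 0$ is $u^{s-1}/\|u\|_{L^s(\Omega)}^{s-1}$; the Alikhanov--Vergara--Zacher inequality for convex functionals then gives
\[
\|u\|_{L^s(\Omega)}^{s-1}\,D_t^\alpha\|u\|_{L^s(\Omega)}\ \le\ \int_\Omega u^{s-1}\,D_t^\alpha u\,dx.
\]
Combining these three bounds and dividing through by $\|u\|_{L^s(\Omega)}^{s-1}$ reduces the problem to the scalar fractional differential inequality
\[
D_t^\alpha\phi(t)+\frac{\nu}{C_N}\,t^\beta\phi^\gamma(t)\ \le\ \|f(t)\|_{L^s(\Omega)}\ \le\ \frac{Kt^\beta}{(t+1)^{\alpha+\beta}},\qquad \phi(t):=\|u(t)\|_{L^s(\Omega)},
\]
with initial value $\phi(0)=\|u_0\|_{L^s(\Omega)}$.

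Once $\phi$ is known to satisfy this inequality, the supersolution $w(t)$ constructed in the proof of Theorem~\ref{t-nonli-f}, with its threshold parameter $\varphi_0$ chosen to exceed both $\|u_0\|_{L^s(\Omega)}$ and the quantities appearing there, becomes an admissible upper barrier. Since $g(\phi):=(\nu/C_N)\,t^\beta\phi^\gamma$ is nondecreasing in $\phi$, the fractional comparison principle in Lemma~\ref{lem-conti} (applied exactly as in the supersolution half of Theorem~\ref{t-nonli-f}) gives $\phi(t)\le w(t)\le \breve{C}_2/(1+t^{(\alpha+\beta)/\gamma})$, which is precisely the asserted estimate \eqref{Dpde}.

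The principal technical obstacle is the rigorous justification of the convex chain rule at the weak-solution level: a weak solution $u\in L^s([0,T];L^s(\Omega))$ need not be smooth enough for $D_t^\alpha u$ to be defined pointwise, so the key inequality must be obtained via a time-mollification (or Steklov-averaging) argument combined with the kernel representation of $D_t^\alpha$ and the lower semicontinuity of the convex functional $\Phi$. The possible vanishing of $\phi$ at isolated times, which would invalidate the division by $\|u\|_{L^s(\Omega)}^{s-1}$, can be absorbed through the regularization $\phi_\varepsilon=(\phi^s+\varepsilon)^{1/s}$ and a subsequent $\varepsilon\downarrow 0$ limit.
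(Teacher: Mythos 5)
Your proposal is correct and follows essentially the same route as the paper: an $L^s$ energy estimate using the structural assumption \eqref{str-ass1} and H\"older's inequality, the fractional chain-rule inequality $\|u\|_{L^s}^{s-1}D_t^\alpha\|u\|_{L^s}\le\int_\Omega u^{s-1}D_t^\alpha u\,dx$ (which the paper cites from the literature rather than deriving), and reduction to the scalar inequality handled by Theorem~\ref{t-nonli-f}. The only cosmetic difference is that the paper disposes of the possible vanishing of $\|u(t,\cdot)\|_{L^s}$ by a direct case split rather than your $\varepsilon$-regularization, and it retains the constant $C_N$ only implicitly.
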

\begin{proof}
    Multiplying the equation $D_{t}^{\alpha}u(t,x)+ \nu t^{\beta}\mathcal{N}(u(t,x))=f(t,x)$ by $|u|^{s-2}u$ and integrating it over $\Omega$ to find  
    \begin{equation*}
      \int_{\Omega}|u|^{s-2}uD_{t}^{\alpha}udx+ \int_{\Omega}\nu t^{\beta}\mathcal{N}(u)|u|^{s-2}udx=\int_{\Omega}f(t,x)|u|^{s-2}udx.
    \end{equation*}
On one hand, for the nonnegative weak solution $u(t,x)\geq 0$, we have \cite[Page 221, Corollary 3.1]{uniq-exis10}
     \begin{equation}\label{dt-inq}
        \|u(t,\cdot)\|_{L^{s}(\Omega)}^{s-1}D_{t}^{\alpha}\|u(t,\cdot)\|_{L^{s}(\Omega)}\leq \int_{\Omega}|u(t,x)|^{s-2}u(t,x) \cdot D_{t}^{\alpha}u(t,x)dx.
    \end{equation}     
Thus, by \eqref{dt-inq} and the structural assumption \eqref{str-ass1} we have
    \begin{equation*}
        \|u(t,\cdot)\|_{L^{s}(\Omega)}^{s-1}D_{t}^{\alpha}\|u(t,\cdot)\|_{L^{s}(\Omega)} +\nu t^{\beta}\|u(t,\cdot)\|_{L^{s}(\Omega)}^{s+\gamma-1}\leq\int_{\Omega}f(t,x)|u|^{s-2}udx.
    \end{equation*}
According to H$\mathrm{\Ddot{o}}$lder inequality and decay estimate \eqref{f_esti}, one has 
\begin{equation*}
    \int_{\Omega}f(t,x)|u|^{s-2}udx\leq \|f(t)\|_{L^{s}(\Omega)}\|u(t,\cdot)\|_{L^{s}(\Omega)}^{s-1}\leq \frac{Kt^{\beta}\|u(t,\cdot)\|_{L^{s}(\Omega)}^{s-1}}{(t+1)^{\alpha+\beta}}.
\end{equation*}
Hence, we get that
    \begin{equation*}
        \|u(t,\cdot)\|_{L^{s}(\Omega)}^{s-1}D_{t}^{\alpha}\|u(t,\cdot)\|_{L^{s}(\Omega)} +\nu t^{\beta}\|u(t,\cdot)\|_{L^{s}(\Omega)}^{s+\gamma-1}\leq\frac{Kt^{\beta}\|u(t,\cdot)\|_{L^{s}(\Omega)}^{s-1}}{(t+1)^{\alpha+\beta}}.
    \end{equation*}
    
Clearly, if  $\|u(t,\cdot)\|=0$, the required inequality \eqref{Dpde} holds. On the other hand, if  $\|u(t,\cdot)\|>0$, we have
    \begin{equation*}
        D_{t}^{\alpha}\|u\|_{L^{s}(\Omega)}(t) +\nu t^{\beta}\|u\|_{L^{s}(\Omega)}^{\gamma}(t)\leq\frac{Kt^{\beta}}{(t+1)^{\alpha+\beta}}.
    \end{equation*}   
 Let $\varphi(t):=\|u(t, \cdot)\|_{L^{s}(\Omega)}$ and by Theorem \ref{t-nonli-f}, the decay estimate \eqref{Dpde} follows.
\end{proof}

\section{$\mathcal{CM}$-preserving numerical methods}\label{sec2}
From the perspective of structural preservation algorithms, it is natural to develop structural preservation algorithms for time fractional order equations, so that numerical solutions can accurately preserve the long-term optimal decay rate of the solution in Lemma \ref{t-nonli-f} for nonlinear fractional ODEs and in Corollary \ref{c-decay-f-N} for nonlinear fractional PDEs. That is to develop numerical Mittag-Leffler stability for the model \eqref{pde_f}.

The Caputo fractional derivative $D_{t}^{\alpha}(y(t))$ has a convolutional structure,  so we consider that its numerical approximation also has a corresponding discrete convolution structure, which takes the form of
\begin{equation}\label{dis-caputo}
    D_{\tau}^{\alpha}(y_{n})=\frac{1}{\tau^{\alpha}}\sum_{k=0}^{n}\omega_{k}(y_{n-k}-y_{0})=\frac{1}{\tau^{\alpha}}\Big(\delta_{n}y_{0}+\sum_{k=0}^{n-1}\omega_{k}y_{n-k}\Big)~~for~n=1,2,\dots
\end{equation}
where $\delta_{n}:=-\sum_{k=0}^{n-1}\omega_{k}$ and $y_{k}$ denotes the numerical approximation of $y(t_{k})$ on the uniform meshes $t_n=n\tau$ with step size $\tau>0$. The weight coefficients $\{\omega_{k}\}_{k=0}^{\infty}$ can be computed explicitly. 
The two most popular methods are Gr\"unwald-Letnikov scheme and L1 method, both of which can be seen as fractional extensions of the classical backward Euler scheme.
\begin{example}\label{exp1}
  For L1 method, the weight $\omega_{k}$ satisfies 
    \[
    \omega_{0}=\frac{1}{\Gamma(2-\alpha)},~\omega_{k}=\frac{1}{\Gamma(2-\alpha)} \left[ (k+1)^{1-\alpha}-2k^{1-\alpha}+(k-1)^{1-\alpha} \right] \text{ for } k=1,2,\dots.
    \]
and consequently $\delta_{n}=-\sum_{k=0}^{n-1}\omega_{k}=[(n-1)^{1-\alpha}-n^{1-\alpha}]/\Gamma(2-\alpha)$.
\end{example}

\begin{example}\label{exp2}
   For the Gr\"unwald-Letnikov scheme, the weights $\omega_{k}$ are the $k$-th coefficients of the generating function $F_{\omega}(z)=(1-z)^{\alpha}=\sum_{k=0}^{\infty}\omega_{k}z^k$. Therefore, 
\[
\omega_{0}=1,~\omega_{k}=\left( 1-\frac{\alpha+1}{k} \right) \omega_{k-1} \text{ for } k=1,2,\dots.
\]
\end{example}

By using Riemann-Liouville fractional integration, the initial value problem of the Caputo derivative can be transformed into the corresponding Volterra integral equation, which also has a convolutional structure and its kernel function is $k_{\alpha}(t):=t^{\alpha-1}/\Gamma(\alpha)$ for $t>0$. The standard kernel function $k_{\alpha}(t)$ is a very important class of completely monotonic ($\mathcal{CM}$) functions 
\footnote{A function $g(t): (0, \infty)\to \mathbb{R}$ is said to be completely monotone if $(-1)^n g^{(n)}(t)\geq 0$ for all $n\geq0$ and $t>0$. }. Based on the basic idea of preserving structure algorithm, the $\mathcal{CM}$-preserving numerical methods for Riemann-Liouville fractional integration is developed in \cite{com-mono}. The new concept and framework of $\mathcal{CM}$-preserving numerical methods has many excellent properties in the discrete coefficients.  This kind of numerical methods has recently been successfully applied to a class of nonlinear anomalous diffusion models
\cite{CM-preserving}.
It is proved in \cite{com-mono} that for $\mathcal{CM}$-preserving numerical methods the weights $\omega_{k}$ and $\delta_{n}$ have the convenient properties
\begin{equation}\label{coe-pro1}
    \left\{\begin{aligned}
        &(i)~\omega_{0}>0,~\omega_{1}<\omega_{2}<\cdots<\omega_{n}<0~~~~(monotonicity);\\
        &(ii)~\sum_{k=0}^{\infty}\omega_{k}=0~~~~(conservation);\\
        &(iii)~\omega_{n}=O(n^{-1-\alpha}),~\delta_{n}=-\sum_{k=0}^{n-1}\omega_{k}=O(n^{-\alpha})~~~~(uniform~decay~rate).
    \end{aligned}\right.
\end{equation}
These properties imply that there exist constants $0<C_{3}\leq C_{4}$, which are independent of $n$, sunch that
\begin{equation}\label{coe-pro2}
    \frac{C_{3}}{n^{1+\alpha}}\leq|\omega_{n}|\leq\frac{C_{4}}{n^{1+\alpha}},~~\frac{C_{3}}{n^{\alpha}}\leq|\delta_{n}|\leq\frac{C_{4}}{n^{\alpha}},~~\frac{C_{3}}{n^{\alpha}}\leq\sum_{k=n}^{\infty }|\omega_{k}|\leq\frac{C_{4}}{n^{\alpha}}~for~n=1,2,\dots.
\end{equation}

It is easy to verify the Gr\"unwald-Letnikov scheme and L1 method in the above two examples are $\mathcal{CM}$-preserving
and their coefficients meet the above properties. See more details in \cite{com-mono}.

\section{Numerical Mittag-Leffler stability} \label{sec3}
In the section, we study the numerical Mittag-Leffler stability for $\mathcal{CM}$-preserving numerical methods for nonlinear sub-diffusion models with variable coefficients. Specifically, we prove that for the $\mathcal{CM}$-preserving schemes, its numerical solution has a long-term optimal decay rate that is completely consistent with the corresponding continuous equation. Our proof mainly consists of the following three steps. Firstly, we establish the discrete comparison principle for variable coefficient equations. Then we apply this result to homogeneous equations with variable coefficients and establish the Mittag-Leffler stability of numerical solutions. Finally, for general non-homogeneous equations with source terms, we also establish the numerical Mittag-Leffler stability.

\subsection{Discrete fractional comparison principle}
We have the following discrete fractional comparison principle, which extend the results of \cite{com-mono}. 

\begin{lemma}\label{dis-com-prin}
   Let $D_{\tau}^{\alpha}$ be the $\mathcal{CM}$-preserving discrete operator defined in \eqref{dis-caputo}. Assume the function $f(\cdot)$ is nondecreasing and $\nu>0$. Suppose that the sequences $\{u_{j}\}_{j=0}^{\infty}$, $\{y_{j}\}_{j=0}^{\infty}$,$\{w_{j}\}_{j=0}^{\infty}$ satisfy $u_{0}\leq y_{0}\leq w_{0}$ and
 \[
 D_{\tau}^{\alpha}(u_{n})+\nu t_{n}^{\beta}f(u_{n})\leq 0,~~D_{\tau}^{\alpha}(y_{n})+\nu t_{n}^{\beta}f(y_{n})= 0,~~D_{\tau}^{\alpha}(w_{n})+\nu t_{n}^{\beta}f(w_{n})\geq 0~~for~n\geq 1.
 \]
  Then $u_{n}\leq y_{n}\leq w_{n}$ for all $n=1,2,\dots$. We call $\{u_{j}\}_{j=0}^{\infty}$  a discrete subsolution for $\{y_{j}\}_{j=0}^{\infty}$ and $\{w_{j}\}_{j=0}^{\infty}$ a discrete supsolution.
\end{lemma}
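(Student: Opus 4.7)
The plan is to prove both one-sided inequalities by induction on $n$, treating them symmetrically. Setting $e_n := y_n - u_n$ (and later $e_n := w_n - y_n$ for the upper bound), I have $e_0 \geq 0$ from the hypothesis, and subtracting the two defining relations yields
\[
D_{\tau}^{\alpha}(e_n) + \nu t_n^{\beta}\bigl[f(y_n) - f(u_n)\bigr] \geq 0,
\]
so the task reduces to showing $e_n \geq 0$ given $e_0, \ldots, e_{n-1} \geq 0$.

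The engine of the argument is the sign structure of the $\mathcal{CM}$-preserving coefficients. I would use the rewritten form
\[
D_{\tau}^{\alpha}(e_n) = \tau^{-\alpha}\Bigl[\omega_0 e_n + \sum_{k=1}^{n-1}\omega_k e_{n-k} + \delta_n e_0\Bigr],
\]
and exploit three sign facts: $\omega_0 > 0$ and $\omega_k < 0$ for $k\geq 1$ are read directly off property (i) of \eqref{coe-pro1}, while $\delta_n < 0$ is obtained by combining (i) with the conservation identity (ii) to rewrite $\delta_n = \sum_{k=n}^{\infty}\omega_k$, a sum whose terms are all strictly negative. Note that the asymptotic bound on $|\delta_n|$ in \eqref{coe-pro2} is not enough here; it is the sign that matters for the contradiction step.

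With these in hand, the induction step goes by contradiction. Assuming $e_n < 0$ in addition to $e_0,\ldots,e_{n-1}\geq 0$, each summand in the bracket is non-positive and $\omega_0 e_n$ is strictly negative, so $D_{\tau}^{\alpha}(e_n) < 0$ strictly. Simultaneously, $y_n < u_n$ together with $f$ being nondecreasing gives $\nu t_n^{\beta}[f(y_n) - f(u_n)] \leq 0$, and adding the two quantities contradicts the displayed inequality above. The base case $n=1$ follows from the same logic, the middle sum being empty and $\delta_1 = -\omega_0$ producing $\omega_0(e_1 - e_0) < 0$. The bound $y_n \leq w_n$ is obtained by the identical argument with $e_n := w_n - y_n$, using $f(w_n) - f(y_n) \leq 0$ whenever $w_n < y_n$. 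I do not anticipate a serious obstacle: the proof is essentially careful bookkeeping of signs, and the only subtle point is justifying $\delta_n < 0$ via properties (i)–(ii) rather than via the magnitude estimate (iii).
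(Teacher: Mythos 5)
Your proposal is correct and follows essentially the same route as the paper: an induction on $n$ driven by the sign structure of the $\mathcal{CM}$-preserving weights ($\omega_0>0$, $\omega_k<0$ for $k\ge 1$, and $\delta_n<0$ via conservation), applied separately to each one-sided comparison. The paper packages the induction step through the truncation $\eta_n=\max\{\xi_n,0\}$ and an indicator-function manipulation rather than your direct contradiction, but the underlying argument is identical.
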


\begin{proof}    
Define the sequence  $\xi_{n}:=u_{n}-y_{n}$ for $n\geq0$ with $\xi_{0}=u_{n}-y_{n}\leq 0$. 
Then we get that $D_{\tau}^{\alpha}(\xi_{n})\leq -\nu t_{n}^{\beta}[f(u_{n})-f(y_{n})]$.
Because $f(\cdot)$ is nondecreasing and $\nu>0$ and  $t_{n}^{\beta}>0$ for any $\beta>-\alpha$, we further get that
    \begin{align}\label{po_inq1}
    \frac{1}{\tau^{\alpha}}\left(\sum_{k=0}^{n-1}\omega_{k}\xi_{n-k}+\delta_{n}\xi_{0}\right)\leq  -\nu t_{n}^{\beta} [f(u_{n})-f(y_{n})]\leq 0.
   \end{align}
Define the indicator function
    \begin{equation}
    \chi_{(\xi_{n}\geq0)}=\left\{
        \begin{aligned}
            &1~~for~~\xi_{n}\geq0;\\
            &0~~for~~\xi_{n}<0,
        \end{aligned}\right.
    \end{equation}
Multiplying the inequality \eqref{po_inq1}  by $\chi_{(\xi_{n}\geq0)}$ gives
\begin{align*}
    \frac{1}{\tau^{\alpha}}\left(\omega_{0}\xi_{n}\chi_{(\xi_{n}\geq0)}+\sum_{k=1}^{n-1}\omega_{k}\xi_{n-k}\chi_{(\xi_{n}\geq0)}+\delta_{n}\xi_{0}\chi_{(\xi_{n}\geq0)}\right)\leq  -\nu t_{n}^{\beta} [f(u_{n})-f(y_{n})] \chi_{(\xi_{n}\geq0)}\leq 0.
\end{align*}
It's easy to see $\xi_{k}\chi_{(\xi_{n}\geq0)}\leq\max\{\xi_{k}, 0\}$.
Let $\eta_{n}:=\max\{ \xi_{n}, 0\}$ with $\eta_{0}=0$.
Since $\omega_{k}\leq0$ for $k\geq1$ and $\delta_{n}\leq 0$,  we have
\begin{align*}
    \omega_{0}\xi_{n}\chi_{(\xi_{n}\geq0)}+\sum_{k=1}^{n-1}\omega_{k}\xi_{n-k}\chi_{(\xi_{n}\geq0)}+\delta_{n}\xi_{0}\chi_{(\xi_{n}\geq0)}\geq \omega_{0}\eta_{n}+\sum_{k=1}^{n-1}\omega_{k}\eta_{n-k}+\delta_{n}\eta_{0}.
\end{align*}
That is $0\geq\omega_{0}\eta_{n}+\sum_{k=1}^{n-1}\omega_{k}\eta_{n-k}$ for all $n\geq 1$.
For $n=1$, because of $\omega_{0}>0$ and $\omega_{0}\eta_{1}\leq 0$, one has $\eta_{1}\leq 0$. Meanwhile, we know $\eta_{1}\geq0$ from the definition of $\eta_{1}$, which yields that $\eta_{1}=0$.
Following a similar method, we can easily obtain $\eta_{n}=0$ for all $n\geq 1$. This implies that $\xi_{n}\leq 0$ for all $n\geq 1$ , i.e., $u_{n}\leq y_{n}$.
The argument for $y_{n}\leq w_{n}$ is similar.
\end{proof}

\subsection{Numerical Mittag-Leffler stability for homogeneous equations} \label{sec:32}
With the above preparation, we can now prove our first result, which is the numerical Mittag-Leffler stability of nonlinear homogeneous equations with variable coefficients. We focus here on the direct impact of the coefficient factor $t^\beta$ on the long-term optimal decay rate.

\begin{theorem}\label{uni-decay}
 For the nonlinear fractional ODE \eqref{fode}, consider the $\mathcal{CM}$-preserving scheme on uniform time meshes $\{t_{n}=n\tau\}_{n=0}^{\infty}$ given by
    \begin{equation}\label{time-dis}
         D_{\tau}^{\alpha}(y_{n})=\frac{1}{\tau^{\alpha}}\left(\delta_{n}y_{0}+\sum_{k=0}^{n-1}\omega_{k}y_{n-k}\right)=-\nu t_{n}^{\beta}y_{n}^{\gamma}~~for~n\geq 1,
    \end{equation}
    where $y_0>0$, $\beta>-\alpha$, $\gamma>0$ and $\nu>0$. Then the numerical solution $\{y_{n}\}$ of \eqref{time-dis} satisfies
    \begin{equation}\label{un-decay}
        \frac{C_{5}}{1+t_{n}^{(\alpha+\beta)/\gamma}}\leq y_{n}\leq \frac{C_{6}}{1+t_{n}^{(\alpha+\beta)/\gamma}}~~for~n=0,1,2,\dots,
    \end{equation}
    where the positive constants $C_{5}$, $C_{6}$ are independent of $n$ and $\tau$.
\end{theorem}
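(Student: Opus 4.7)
The plan is to mirror the blueprint of Theorem \ref{t-nonli-f}: replace the continuous comparison principle by its discrete counterpart (Lemma \ref{dis-com-prin}) and replace the explicit continuous barriers by piecewise discrete sequences whose action under $D_\tau^{\alpha}$ can be controlled using only the three structural properties \eqref{coe-pro1}--\eqref{coe-pro2} of the CM-preserving weights. Write $p:=(\alpha+\beta)/\gamma>0$. The nonlinearity $f(y)=y^{\gamma}$ is nondecreasing on $[0,\infty)$, so once I produce nonnegative sequences $\{u_n\},\{w_n\}$ satisfying $u_0\le y_0\le w_0$ together with
\[
D_\tau^{\alpha}(u_n)+\nu t_n^{\beta}u_n^{\gamma}\le 0\le D_\tau^{\alpha}(w_n)+\nu t_n^{\beta}w_n^{\gamma},
\]
Lemma \ref{dis-com-prin} delivers $u_n\le y_n\le w_n$, and the explicit form of $u_n,w_n$ gives \eqref{un-decay}.

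\textbf{Discrete supersolution.} Pick an index $N_0$ depending only on $y_0,\nu,\alpha,\beta,\gamma$ (not on $\tau$ or $n$) and set
\[
w_n=\begin{cases} y_0,& 0\le n\le N_0,\\[2pt] y_0\,t_{N_0}^{p}\,t_n^{-p},& n>N_0.\end{cases}
\]
On the plateau $n\le N_0$ the sequence is constant, so $D_\tau^{\alpha}(w_n)=0$ and $\nu t_n^{\beta}w_n^{\gamma}\ge 0$. For $n>N_0$ I split the convolution $\sum_{k=0}^{n-1}\omega_k w_{n-k}+\delta_n w_0$ into the plateau range $n-k\le N_0$ (absorbed into $\delta_n y_0$ via the conservation identity $\sum_{k\ge 0}\omega_k=0$) and the decaying tail $n-k>N_0$, which is estimated by Abel summation together with $|\omega_k|\le C_4k^{-1-\alpha}$. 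This yields a discrete analog of the bound $|D_\tau^{\alpha}(t_n^{-p})|\le C\,t_n^{-p-\alpha}$. Taking $N_0$ large enough so that $\nu t_{N_0}^{\alpha+\beta}y_0^{\gamma-1}$ dominates the accumulated constant forces the desired inequality.

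\textbf{Discrete subsolution.} Following the continuous construction, I would use
\[
u_n=\begin{cases} y_0-A\,t_n^{\alpha+\beta},& 0\le n\le M_0,\\[2pt] C_\ast y_0\,t_{M_0}^{p}\,t_n^{-p},& n>M_0,\end{cases}
\]
with small constants $A,C_\ast\in(0,1)$ and threshold $M_0$ chosen so that $u_n$ is nonnegative, monotonically decreasing, and continuous across $n=M_0$. For $n\le M_0$ the key input is a discrete version of $D_t^{\alpha}(t^{\alpha+\beta})\sim C\,t^{\beta}$, obtained from \eqref{coe-pro2} applied to the monotone sequence $t_n^{\alpha+\beta}$; this makes $D_\tau^{\alpha}(u_n)\approx -C\,A\,t_n^{\beta}$, and taking $A$ large enough beats $\nu t_n^{\beta}u_n^{\gamma}\le \nu t_n^{\beta}y_0^{\gamma}$. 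For $n>M_0$ the convolution splitting is identical to the supersolution case with reversed sign, and the monotonicity of $u_n$ together with $\omega_k<0$ for $k\ge 1$ controls the plateau contribution.

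\textbf{Main obstacle.} The routine bookkeeping is straightforward; the real work is in establishing the sharp two-sided discrete convolution estimates
\[
|D_\tau^{\alpha}(t_n^{-p})|\le C\,t_n^{-p-\alpha},\qquad D_\tau^{\alpha}(t_n^{\alpha+\beta})\ge C\,t_n^{\beta},
\]
uniformly in $\tau$ and $n$, relying only on the qualitative properties \eqref{coe-pro1}--\eqref{coe-pro2} rather than a specific closed form for $\omega_k$. This forces one to split the convolution into an initial segment, an intermediate range, and a tail, and to combine Abel summation with the pointwise bound $|\omega_k|\lesssim k^{-1-\alpha}$ and the remainder bound $\sum_{k\ge n}|\omega_k|\lesssim n^{-\alpha}$. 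Once these discrete-calculus estimates are secured, the discrete comparison principle of Lemma \ref{dis-com-prin} closes the argument exactly as in the continuous case.
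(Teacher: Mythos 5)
Your proposal is correct and follows essentially the same route as the paper: the same piecewise barriers (a plateau or $y_0-\text{const}\cdot t_n^{\alpha+\beta}$ segment glued to a $t_n^{-(\alpha+\beta)/\gamma}$ tail), the same discrete comparison principle, and the same convolution splitting with Abel summation and the coefficient bounds \eqref{coe-pro2} to establish the two key discrete-calculus estimates you identify. The only cosmetic difference is bookkeeping (the paper writes the tail estimate via $\sum_k \delta_k(w_{n-k}-w_{n-k-1})$ and convexity of the barrier, and splits the range at $2n_2$), and note that your subsolution constant $A$ must ultimately be taken large, not small, with positivity then recovered by choosing the switching time $t_{M_0}$ accordingly — exactly as in the paper's parameter selection \eqref{un-coe}.
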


The main method we prove here is to construct appropriate discrete upper and lower solutions $\{u_n\}$ and $\{w_n\}$  such that they all have the same long-term asymptotic decay rate $O(t_{n}^{-(\alpha+\beta)/\gamma})$, and then obtain the desired result using the discrete comparison principle.

\begin{proof} 

\textbf{I. Construction of the discrete subsolution} Set $g_{n}:=t_{n}^{\alpha+\beta}/\Gamma(1+\alpha+\beta)$ for $n\geq 0$. Define the sequence
\begin{equation}\label{vn}
    u_{n}=\left\{\begin{aligned}
                  &y_{0}-\mu g_{n}~~for~n=0,1,\dots,n_{1},\\
                  &C_{7}t_{n}^{-\frac{\alpha+\beta}{\gamma}}~~for~n\geq n_{1}+1,
                 \end{aligned}\right.
\end{equation}
where $\mu>0$, $C_{7}>0$ and $n_{1}\in\mathbb{N}$ are three parameters to be determined. 
Below, we will make appropriate choices for these three parameters to ensure that:

 (i) the sequence $\{u_{n}\}$ is monotonically decreasing and positive;
  
 (ii) satisfies that $D_{\tau}^{\alpha}(u_{n})+\nu t_{n}^{\beta}u_{n}^{\gamma}\leq 0$.\\

We divide the proof into several steps.

\emph{Step 1. $\{u_{n}\}$~is monotonically decreasing.}
From \eqref{vn} we know that $g_{n}$ is monotonically increasing with respect to $n$ for $1\leq n\leq n_{1}$ and  $\mu >0$, $C_{7}>0$. 
Therefore, in order for $\{u_n\}$ to be positive and monotonically decreasing if and only if $u_{n_{1}}\geq u_{n_{1}+1}$.
That is 
\begin{equation}\label{vn1}
   y_{0}-\frac{\mu}{\Gamma(1+\alpha+\beta)}t_{n_{1}}^{\alpha+\beta}\geq C_{7}t_{n_{1}+1}^{-\frac{\alpha+\beta}{\gamma}}.
\end{equation}

\emph{Step 2. $\{u_{n}\}$~is a discrete subsolution for $1\leq n\leq n_{1}$.}
For $1\leq n\leq n_{1}$, it follows from \eqref{dis-caputo} that
\begin{align*}
     D_{\tau}^{\alpha}(u_{n})&=\frac{1}{\tau^{\alpha}}\left(\delta_{n}u_{0}+\sum_{k=1}^{n}\omega_{n-k}u_{k}\right)
     =\frac{1}{\tau^{\alpha}}\left(\left(\sum_{k=1}^{n}\omega_{n-k}+\delta_{n}\right) u_{0}-\mu\sum_{k=1}^{n}\omega_{n-k}g_{k}\right)
     =-\frac{\mu}{\tau^{\alpha}}\sum_{k=1}^{n}\omega_{n-k}g_{k}.
\end{align*}
By the properties \eqref{coe-pro1}, we have the estimate
\begin{equation*}
    \sum_{k=1}^{n}\omega_{n-k}g_{k}=\omega_{0}g_{n}+\sum_{k=1}^{n-1}\omega_{k}g_{n-k}=\sum_{k=1}^{n-1}\omega_{k}(g_{n-k}-g_{n})+\left(-\sum_{k=n}^{\infty}\omega_{k}g_{n}\right)\geq-\sum_{k=n}^{\infty}\omega_{k}g_{n}\geq \frac{C_{3}}{n^{\alpha}}g_{n},
\end{equation*}
which yields that $D_{\tau}^{\alpha}(u_{n})\leq -\frac{\mu}{\tau^{\alpha}}\frac{C_{3}}{n^{\alpha}}g_{n}=-\frac{\mu C_{3}}{\Gamma(1+\alpha+\beta)}t_{n}^{\beta}$.
Therefore, in order to make $\{u_{n}\}$ a discrete lower solution, we only need to select parameters such that
\[
\frac{\mu C_{3}}{\Gamma(1+\alpha+\beta)}t_{n}^{\beta}\geq \nu t_{n}^{\beta}(y_{0}-\mu g_{n})^{\gamma}\geq \nu t_{n}^{\beta}y_{0}^{\gamma}.
\]
Hence,  we can take $\mu$ such that
\begin{equation}\label{vn2}
\mu \geq \nu y_{0}^{\gamma}\frac{\Gamma(1+\alpha+\beta)}{C_{3}}.
\end{equation}

\emph{Step 3. $\{u_{n}\}$~is a discrete subsolution for $ n\geq n_{1}+1$.}
For $n\geq n_{1}+1$, we have 
\begin{align*}
    D_{\tau}^{\alpha}(u_{n})=\frac{1}{\tau^{\alpha}}\left(\delta_{n}u_{0}+\sum_{k=1}^{n}\omega_{n-k}u_{k}\right)
    =\frac{1}{\tau^{\alpha}}\underset{:=I_{1}}{\underbrace{\left(\sum_{k=1}^{n_{0}}\omega_{n-k}u_{k}+\delta_{n}u_{0}\right)}}+\frac{1}{\tau^{\alpha}}\underset{:=I_{2}}{\underbrace{\sum_{k=n_{0}+1}^{n}\omega_{n-k}u_{k}}}.
\end{align*}
Note that $I_{1}$ can be rewritten as
$I_{1}=\left(\sum_{k=1}^{n_{0}}\omega_{n-k}+\delta_{n}\right)u_{0}-\mu\sum_{k=1}^{n_{0}}\omega_{n-k}g_{k}=-\sum_{k=0}^{n-n_{0}-1}\omega_{k}u_{0}-\mu\sum_{k=1}^{n_{0}}\omega_{n-k}g_{k}.$
Hence, recalling that $\omega_{k}<0$ for $k\geq1$, we have
\begin{align*}
    I_{1}+I_{2}=\omega_{0}(u_{n}-u_{0})+\sum_{k=1}^{n-n_{0}-1}|\omega_{k}|(u_{0}-u_{n-k})+\mu\sum_{k=1}^{n_{0}}|\omega_{n-k}|g_{k}.
\end{align*}
In view of $u_{0}=u_{0}-u_{n_{1}}+u_{n_{1}}=u_{n_{1}}+\mu g_{n_{1}}$ and $\omega_{0}=-\sum_{k=1}^{\infty}\omega_{k}=\sum_{k=1}^{\infty}|\omega_{k}|>0$, we get that
\begin{align*}
    I_{1}+I_{2}=\sum_{k=n-n_{1}}^{\infty }|\omega_{k}| (u_{n}-u_{n_{1}})+\sum_{k=1}^{n-n_{1}-1}|\omega_{k}| (u_{n}-u_{n-k})+\mu\sum_{k=1}^{n_{1}}|\omega_{n-k}|g_{k}+\mu g_{n_{1}}\left(\sum_{k=1}^{n-n_{1}-1}|\omega_{k}|-\omega_{0}\right).
\end{align*}
Since $\{u_{n}\}$ is monotonically decreasing, so
\begin{align*}
    I_{1}+I_{2}&\leq\mu\sum_{k=1}^{n_{1}}|\omega_{n-k}|g_{k}+\mu g_{n_{1}}\sum_{k=n-n_{1}}^{\infty}\omega_{k}\\
    &\leq \mu g_{n_{1}}\sum_{k=1}^{n_{1}}|\omega_{n-k}|+\mu g_{n_{1}}\sum_{k=n-n_{1}}^{\infty}\omega_{k}=\mu g_{n_{1}}\sum_{k=n}^{\infty}\omega_{k}\\
    &\leq -\mu g_{n_{1}}\frac{C_{3}}{n^{\alpha}}=-\nu y_{0}^{\gamma} t_{n_{1}}^{\alpha+\beta}\frac{1}{n^{\alpha}}.
\end{align*}
Thus $D_{\tau}^{\alpha} (u_{n})=\frac{1}{\tau^{\alpha}}(I_{1}+I_{2})\leq -\nu y_{0}^{\gamma}t_{n_{1}}^{\alpha+\beta}t_{n}^{-\alpha}.$
We just need to select parameters such that $D_{\tau}^{\alpha}(u_{n})\leq -\nu y_{0}^{\gamma}t_{n_{1}}^{\alpha+\beta}t_{n}^{-\alpha} \leq -\nu t_{n}^{\beta}u_{n}^{\gamma} (:=-\nu C_{7}^{\gamma}t_{n}^{-\alpha}).$
That is
\begin{equation}\label{vn3}
    y_{0}^{\gamma}t_{n_{1}}^{\alpha+\beta}\geq C_{7}^{\gamma}.
\end{equation}

\emph{Step 4. Parameters selection relying only on data.}
We now choose appropriate parameters to ensure that all three inequalities \eqref{vn1},  \eqref{vn2} and \eqref{vn3} hold simultaneously. We can choose 
 \begin{equation}\label{un-coe}
 \mu:=\nu y_{0}^{\gamma}\frac{\Gamma(1+\alpha+\beta)}{C_{3}}, \quad
t_{n_{1}}:=\max\left\{t_{n}:t_{n}^{\alpha+\beta}\leq\frac{C_{3}y_{0}^{1-\gamma}}{2\nu}\right\},
\quad C_{7}:=\frac{y_{0}t_{n_{1}}^{\frac{\alpha+\beta}{\gamma}}}{2}.
\end{equation}
According to the above definition, we can easily see that inequality \eqref{vn2} and \eqref{vn3} hold. Let's verify that \eqref{vn1} is also valid.
In fact, we have
\[
u_{n_{1}}=y_{0}-\mu g_{n_{1}}=y_{0}-\nu y_{0}^{\gamma}\frac{t_{n_{1}}^{\alpha+\beta}}{C_{3}} \geq y_{0}-\nu y_{0}^{\gamma}\frac{1}{C_{3}}\frac{C_{3}u_{0}^{1-\gamma}}{2\nu}\geq  \frac{y_{0}}{2}, \quad 
u_{n_{1}+1}=C_{7}t_{n_{1}+1}^{-\frac{\alpha+\beta}{\gamma}}=\frac{y_{0}}{2}\left(\frac{t_{n_{1}}}{t_{n_{1}+1}}\right)^{\frac{\alpha+\beta}{\gamma}}\leq\frac{y_{0}}{2}.
\]
Then $u_{n_{1}}\geq u_{n_{1}+1}$. 
Those analysis shows that  $\{u_{n}\}$ is a subsolution to $\{y_{n}\}$ with decay rate $O\left( t_{n}^{-\frac{\alpha+\beta}{\gamma}} \right)$.

\vskip 0.2cm
\textbf{II. Construction of the discrete supersolution.}  Define the function
\begin{equation}\label{wn}
    w(t)=\left\{\begin{aligned}
        &y_{0}~~for~0\leq t\leq t_{n_{2}},\\
        &C_{8}t^{-\frac{\alpha+\beta}{\gamma}}~~for~t_{n_{2}}<t<\infty.
    \end{aligned}\right.
\end{equation}
Then take $w_{n}=w(t_n)$ for $n\geq 0$. The parameters $C_{8}$  and $t_{n_{2}}$ are two parameters to be determined. 
Below, we will make appropriate choices for these two parameters to ensure that:

 (i) the sequence $\{w_{n}\}$ is monotonically decreasing and positive;
  
 (ii) satisfies that $D_{\tau}^{\alpha}(w_{n})+\nu t_{n}^{\beta}w_{n}^{\gamma}\geq 0$.\\

We also divide the proof into several steps.

\emph{Step 1. $\{w_{n}\}$~is monotonically decreasing.}
Obviously when $C_{8}>0$, $w(t)$ is monotonically decreasing on $(t_{n_{2}},\infty)$. In order for $w(t)$ is monotonic for all $t\geq 0$, we just need
$w_{n_2+1}\leq w_{n_2}$. That is $C_{8}t_{n_2+1}^{-\frac{\alpha+\beta}{\gamma}}\leq y_{0}$.
Now we take $C_{8}:=y_{0}t_{n_{2}}^{\frac{\alpha+\beta}{\gamma}}$ to make the required inequality holds. 

\emph{Step 2. $\{w_{n}\}$~is a discrete supersolution for $1\le n\le n_2$.}
From \eqref{wn}, we have
 \begin{equation}\label{Dwn1}
     D_{\tau}^{\alpha}(w_{n})=0>-\nu t_{n}^{\beta}w_{n}^{\gamma} ~~for~n=1,2,\dots, n_{2}.
 \end{equation}

\emph{Step 3. $\{w_{n}\}$~is a discrete supersolution for $n_2\le n\le 2n_2$.}
Considering separately the cases $n_{2}<n\leq 2n_{2}$ and $n>2n_{2}$. 
Note that $\delta_{k}=-\sum_{\ell=0}^{k-1}\omega_{\ell}$ for $k\ge 1$ and put $\delta_{0}=0$. Hence $\omega_{k}=\delta_{k}-\delta_{k+1}$.
With this formula, the discretisation \eqref{dis-caputo} can be rewritten as
\begin{equation}\label{Dh2}
D_{\tau}^{\alpha}(w_{n})=\frac{1}{\tau^{\alpha}}\sum_{k=1}^{n}\delta_{k}(w_{n-k}-w_{n-k-1})=\frac{1}{\tau^{\alpha}}\sum_{k=1}^{n-n_{2}}\delta_{k}(w_{n-k}-w_{n-k-1}),
\end{equation}
where we use the fact $w_{n-k}-w_{n-k-1}=0$ for $k\geq n-n_{2}+1$.
Note that $w'(t)<0$ and $w''(t)>0$ for $t\ge t_{n_{2}}$, which leads to $w_{n-k}-w_{n-k-1}\le w_{n_2}-w_{n_2+1}$ for $1\le k\le n-n_{2}$.
Now it follows from \eqref{Dh2} that
\begin{equation*}
    D_{\tau}^{\alpha}(w_{n})\geq \frac{1}{\tau^{\alpha}}\sum_{k=1}^{n-n_{2}}\delta_{k}(w_{n_{2}}-w_{n_{2}+1})=-\frac{1}{\tau^{\alpha}}(w_{n_{2}}-w_{n_{2}+1})\sum_{k=1}^{n-n_{2}}|\delta_{k}|.
\end{equation*}
By \eqref{coe-pro2} and $n_{2}<n\leq 2n_{2}$, one has
$ \sum_{k=1}^{n-n_{2}}|\delta_{k}|\leq \sum_{k=1}^{n-n_{2}}C_{4}k^{-\alpha}\leq C_{4}\int_{s=0}^{n-n_{2}}s^{-\alpha}ds=\frac{C_{4}(n-n_{2})^{1-\alpha}}{1-\alpha}\leq \frac{C_{4}n_{2}^{1-\alpha}}{1-\alpha},
$
so
\begin{equation}\label{wtn1}
    D_{\tau}^{\alpha}(w_{n})\geq-\frac{1}{\tau^{\alpha}}(w_{n_{2}}-w_{n_{2}+1})\frac{C_{4}n_{2}^{1-\alpha}}{1-\alpha}\geq -\frac{C_{4}\tau^{1-\alpha}n_{2}^{1-\alpha}}{1-\alpha}\frac{C_{8}(\alpha+\beta)}{\gamma}t_{n_{2}}^{-\frac{\alpha+\beta}{\gamma}-1},
\end{equation}
where $w'<0$ and $w''>0$ on $(t_{n_{2}},t_{n_{2}+1})$ imply that
$0<w_{n_{2}}-w_{n_{2}+1}\leq -\tau w'(t_{n_{2}})=\tau\frac{C_{8}(\alpha+\beta)}{\gamma}t_{n_{2}}^{-\frac{\alpha+\beta}{\gamma}-1}$. 
By the definition of $C_{8}$ and $n\leq 2n_{2}$, there is
\begin{equation}\label{wtn2}
    D_{\tau}^{\alpha}(w_{n})\geq -\frac{\nu C_{4}y_{0}(\alpha+\beta)}{(1-\alpha)\gamma}t_{n_{2}}^{-\alpha}\geq -\frac{\nu C_{4}2^{\alpha}y_{0}^{1-\gamma}(\alpha+\beta)}{(1-\alpha)\gamma}y_{0}^{\gamma}t_{n}^{-\alpha}.
\end{equation}
In order for $\{w_{n}\}$ is a discrete supsolution, we only need 
\begin{equation}\label{wnsuper}
    D_{\tau}^{\alpha}(w_{n})\geq
    -\frac{\nu C_{4}2^{\alpha}y_{0}^{1-\gamma}(\alpha+\beta)}{(1-\alpha)\gamma}y_{0}^{\gamma}t_{n}^{-\alpha}
    \geq -\nu t_{n}^{\beta}w_{n}^{\gamma},  \text{ where }\nu t_{n}^{\beta}w_{n}^{\gamma}=\nu C_{8}^{\gamma}t_{n}^{-(\alpha+\beta)}t_{n}^{\beta}=\nu y_{0}^{\gamma}t_{n_{2}}^{\alpha+\beta}t_{n}^{-\alpha}.
\end{equation}
Therefor, we can take  $t_{n_{2}}$ such that
\begin{equation}\label{tn11}
     t_{n_{2}}^{\alpha+\beta}\geq \frac{y_{0}^{1-\gamma}}{\nu}\frac{(\alpha +\beta)2^{\alpha}C_{4}}{\gamma(1-\alpha)}.
\end{equation}

\emph{Step 4. $\{w_{n}\}$~is a discrete supersolution for $ n\ge 2n_2+1$.}
 Let $n':=\lceil \frac{n}{2}\rceil$, so $n'>n_{2}$ and $n'\leq \frac{n+1}{2}$. From \eqref{Dh2}, one has
\begin{align}\label{Dw2}
    D_{\tau}^{\alpha}(w_{n})\geq-\frac{1}{\tau^{\alpha}}\underset{:=I_{3}}{\underbrace{\sum_{k=1}^{n-n'}C_{4}k^{-\alpha}(w_{n-k}-w_{n-k-1})}}-\frac{1}{\tau^{\alpha}}\underset{:=I_{4}}{\underbrace{\sum_{k=n-n'+1}^{n-n_{1}}C_{4}k^{-\alpha}(w_{n-k}-w_{n-k-1})}}.
\end{align}
Using $n'>n_{2}$ and the same method as that in the case of $n<2n_{2}$, one has
\begin{equation}\label{I3}
    \begin{aligned}
        I_{3}\leq C_{4}(w_{n'}-w_{n'+1})\sum_{k=1}^{n-n'}k^{-\alpha}\leq C_{4}\tau|w'(t_{n'})|\int_{s=0}^{n-n'}s^{-\alpha}ds=C_{4}\tau\frac{C_{8}(\alpha+\beta)}{\gamma}t_{n'}^{-\frac{\alpha+\beta}{\gamma}-1}\frac{(n-n')^{1-\alpha}}{1-\alpha}.
    \end{aligned}
\end{equation}
Note that $n'\geq\frac{n}{2}$ implies that $t_{n'}^{-\frac{\alpha+\beta}{\gamma}-1}\leq 2^{\frac{\alpha+\beta}{\gamma}+1}t_{n}^{-\frac{\alpha+\beta}{\gamma}-1}$ and $(n-n')^{1-\alpha}\leq 2^{1-\alpha}n^{1-\alpha}$. Now by the definition of $C_{8}$, we have 
\begin{align*}
    I_{3}\leq C_{4}\tau\frac{(\alpha+\beta)}{\gamma}y_{0}t_{n_{2}}^{\frac{\alpha+\beta}{\gamma}}2^{\frac{\alpha+\beta}{\gamma}+1}t_{n}^{-\frac{\alpha+\beta}{\gamma}-1}\frac{n^{1-\alpha}}{1-\alpha}2^{\alpha-1}\leq C_{4}\tau\frac{(\alpha+\beta)}{\gamma}y_{0}2^{\frac{\alpha+\beta}{\gamma}+1}t_{n}^{-1}\frac{n^{1-\alpha}}{1-\alpha}2^{\alpha-1}.
\end{align*}

On the other hand, $I_{4}$ can be bounded by
$ I_{4}\leq C_{4}(n-n'+1)^{-\alpha}\sum_{k=n-n'+1}^{n-n_{2}}(w_{n-k}-w_{n-k-1})=C_{4}(n-n'+1)^{-\alpha}(w_{n_{2}}-w_{n'})$
Since $n'\leq \frac{n+1}{2}$ and the definitions of $w$, then
\begin{equation}\label{I4}
    I_{4}\leq C_{4}\left(\frac{n+1}{2}\right)^{-\alpha}(w_{n_{2}}-w_{n'})\leq C_{4} 2^{\alpha}n^{-\alpha}y_{0}.
\end{equation}

Combining \eqref{Dw2}, \eqref{I3} and \eqref{I4} give
\begin{align}\label{wnC9}
    D_{\tau}^{\alpha}(w_{n})\geq -t_{n}^{-\alpha}C_{4} 2^{\alpha}y_{0}\left[\frac{(\alpha+\beta)}{\gamma(1-\alpha)}2^{\frac{\alpha+\beta}{\gamma}}+1 \right].
\end{align}
In order for $\{w_{n}\}$ is a discrete supsolution, we only need 
\begin{equation}\label{wnsup}
    D_{\tau}^{\alpha}(w_{n})\geq
    -t_{n}^{-\alpha}C_{4} 2^{\alpha}y_{0}\left[\frac{(\alpha+\beta)}{\gamma(1-\alpha)}2^{\frac{\alpha+\beta}{\gamma}}+1 \right]
    \geq -\nu t_{n}^{\beta}w_{n}^{\gamma},  \text{ where }\nu t_{n}^{\beta}w_{n}^{\gamma}=\nu C_{8}^{\gamma}t_{n}^{-(\alpha+\beta)}t_{n}^{\beta}=\nu y_{0}^{\gamma}t_{n_{2}}^{\alpha+\beta}t_{n}^{-\alpha},
\end{equation}
This leads to the condition 
\begin{equation}\label{tn12}
    t_{n_{2}}^{\alpha+\beta}\geq \frac{y_{0}^{1-\gamma}}{\nu}2^{\alpha}C_{4}\left[\frac{(\alpha+\beta) 2^{\frac{\alpha+\beta}{\gamma}}}{\gamma(1-\alpha)}+1\right].
\end{equation}

From \eqref{tn11} and \eqref{tn12}, we can take the parameter $t_{n_{2}}$ as 
\begin{equation}\label{tn1}
    t_{n_{2}}:=\min \left\{t_{n}:t_{n}^{\alpha+\beta}\geq \frac{y_{0}^{1-\gamma}}{\nu}2^{\alpha}C_{4} \cdot \max\left\{\frac{(\alpha +\beta)}{\gamma(1-\alpha)}, \frac{(\alpha+\beta) 2^{(\alpha+\beta)/\gamma}}{\gamma(1-\alpha)}+1\right\}\right\},
\end{equation}
which can make $\{w_{n}\}$ is a supersolution for $\{y_{n}\}$.
\end{proof}

\subsection{Numerical Mittag-Leffler stability for non-homogeneous equations} \label{sec:33}

Our goal here is study the decay estimates of numerical solutions to the non-homogeneous fractional ODE: $D_{t}^{\alpha}\varphi(t)+\nu t^{\beta}\varphi^{\gamma}(t)=f(t)$ with $\varphi(0)=\varphi_{0}>0$, where all the parameters and $f$ satisfy the same conditions as Theorem \ref{t-nonli-f}.
Hence, Theorem \ref{dis-nonli-f} below is the discrete version of Theorem \ref{t-nonli-f}.

\begin{theorem}\label{dis-nonli-f}
    Consider the time $\mathcal{CM}$-preserving scheme for the fractional ODE given by
    \begin{equation}\label{dis-non-ode-f}
        D_{\tau}^{\alpha}(\varphi_{n})+\nu t_{n}^{\beta}\varphi_{n}^{\gamma}=f_{n}~for~n\geq 1,
    \end{equation} 
    On the uniform mesh $\{t_{n}=n\tau\}_{n=0}^{\infty}$, the numerical solution $\{\varphi_{n}\}$ satisfies
    \begin{equation}\label{un-decay-f}
        \frac{\breve{C}_{5}}{1+t_{n}^{(\alpha+\beta)/\gamma}}\leq \varphi_{n}\leq \frac{\breve{C}_{6}}{1+t_{n}^{(\alpha+\beta)/\gamma}}~~for~n=0,1,2,\dots,
    \end{equation}
    where the positive constants $\breve{C}_{5}$, $\breve{C}_{6}$ are independent of $n$ and $\tau$.
\end{theorem}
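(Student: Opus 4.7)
The plan is to mimic the continuous argument of Theorem \ref{t-nonli-f} at the discrete level, combining the two-piece discrete barriers already built for the homogeneous equation in Theorem \ref{uni-decay} with the discrete comparison principle of Lemma \ref{dis-com-prin}. Because $g(y)=\nu t_n^\beta y^\gamma$ is nondecreasing on $[0,\infty)$ and the source bound on $f$ yields
\[
-\frac{Kt_n^\beta}{(t_n+1)^{\alpha+\beta}} \le D_\tau^\alpha \varphi_n + \nu t_n^\beta \varphi_n^\gamma \le \frac{Kt_n^\beta}{(t_n+1)^{\alpha+\beta}},
\]
it suffices to produce sequences $\{u_n\},\{w_n\}$ with $u_0 \le \varphi_0 \le w_0$, both sandwiched between constant multiples of $(1+t_n^{(\alpha+\beta)/\gamma})^{-1}$, and satisfying
\[
D_\tau^\alpha u_n + \nu t_n^\beta u_n^\gamma \le -\frac{Kt_n^\beta}{(t_n+1)^{\alpha+\beta}}, \qquad D_\tau^\alpha w_n + \nu t_n^\beta w_n^\gamma \ge \frac{Kt_n^\beta}{(t_n+1)^{\alpha+\beta}}.
\]
Lemma \ref{dis-com-prin} then forces $u_n\le\varphi_n\le w_n$ for every $n$, which is exactly \eqref{un-decay-f}.

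For the subsolution I reuse the two-piece template of Theorem \ref{uni-decay}: $u_n = A - \mu g_n$ for $0\le n\le n_1$ and $u_n = C_7 t_n^{-(\alpha+\beta)/\gamma}$ for $n>n_1$, with $g_n=t_n^{\alpha+\beta}/\Gamma(1+\alpha+\beta)$ and $A\in(0,\varphi_0]$ so that $u_0=A\le\varphi_0$. Step~2 of Theorem \ref{uni-decay} gives $D_\tau^\alpha u_n \le -[\mu C_3/\Gamma(1+\alpha+\beta)]\,t_n^\beta$ for $1\le n\le n_1$, so choosing $\mu$ with $\mu C_3/\Gamma(1+\alpha+\beta)\ge \nu A^\gamma + K$ enforces the desired inequality on the initial segment. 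On the tail $n>n_1$, the telescoping identity in Step~3 of Theorem \ref{uni-decay} delivers $D_\tau^\alpha u_n \le -\nu A^\gamma t_{n_1}^{\alpha+\beta}t_n^{-\alpha}$; enlarging $t_{n_1}$ makes this dominate $\nu C_7^\gamma t_n^{-\alpha}+K t_n^{-\alpha}$. Monotonicity of $\{u_n\}$ at the join is enforced by $A-\mu g_{n_1}\ge C_7 t_{n_1+1}^{-(\alpha+\beta)/\gamma}$, exactly as in \eqref{un-coe}. For the supersolution I take the constant-then-power profile $w_n=B$ for $0\le n\le n_2$ and $w_n = B t_{n_2}^{(\alpha+\beta)/\gamma}t_n^{-(\alpha+\beta)/\gamma}$ for $n>n_2$, with $B\ge\max\{\varphi_0,(K/\nu)^{1/\gamma}\}$. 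On the constant piece $D_\tau^\alpha w_n=0$ and $\nu t_n^\beta w_n^\gamma \ge K t_n^\beta \ge Kt_n^\beta/(t_n+1)^{\alpha+\beta}$. On the decaying piece, the lower bounds \eqref{wtn2} and \eqref{wnC9} from Theorem \ref{uni-decay} give $D_\tau^\alpha w_n \ge -MB t_n^{-\alpha}$ for an explicit $M=M(\alpha,\beta,\gamma,C_4)$, while $\nu t_n^\beta w_n^\gamma = \nu B^\gamma t_{n_2}^{\alpha+\beta} t_n^{-\alpha}$; selecting $t_{n_2}^{\alpha+\beta}$ large enough that $\nu B^\gamma t_{n_2}^{\alpha+\beta}\ge MB+K$ then secures $D_\tau^\alpha w_n + \nu t_n^\beta w_n^\gamma \ge K t_n^{-\alpha}\ge Kt_n^\beta/(t_n+1)^{\alpha+\beta}$.

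The main technical obstacle is compatibility of the parameter choices. Each individual condition (for instance $A\le\varphi_0$; $\mu$ large in terms of $A$ and $K$; $t_{n_1}^{\alpha+\beta}$ large in terms of $C_7^\gamma$ and $K$, with $C_7\sim A t_{n_1}^{(\alpha+\beta)/\gamma}$; together with the analogues for $B$ and $t_{n_2}$) is routine, but all must be satisfied simultaneously by a single admissible tuple whose parameters depend only on the data $\varphi_0,\nu,\alpha,\beta,\gamma,K$ and on the $\mathcal{CM}$-coefficient constants $C_3,C_4$. This is exactly the bookkeeping already carried out in Step~4 of Theorem \ref{uni-decay}; the source term only inflates the lower bounds by amounts proportional to $K$, so the resulting constants $\breve C_5,\breve C_6$ remain independent of $n$ and $\tau$. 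Once this is in place, Lemma \ref{dis-com-prin} applied to $\{u_n\},\{\varphi_n\},\{w_n\}$ closes the argument.
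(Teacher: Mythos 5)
Your proposal takes essentially the same route as the paper's own proof: bound the source by $\pm Kt_n^\beta/(t_n+1)^{\alpha+\beta}$, construct two-piece discrete sub/supersolutions of exactly the shape used in Theorem \ref{uni-decay} with parameters inflated to absorb the $K$-terms, and close with the discrete comparison principle of Lemma \ref{dis-com-prin}. One caution on the subsolution: ``enlarging $t_{n_1}$'' so that $\nu A^\gamma t_{n_1}^{\alpha+\beta}$ dominates $\nu C_7^\gamma+K$ works against the constraint $u_{n_1}=A-\mu g_{n_1}\ge u_{n_1+1}>0$ (which caps $t_{n_1}^{\alpha+\beta}$ from above), and the two requirements are simultaneously satisfiable only when the barrier's starting value is sufficiently large relative to $K$ --- the paper resolves this by implicitly imposing $\varphi_0^\gamma\ge\max\{K/\nu,\dots\}$, so this compatibility condition deserves to be stated explicitly rather than filed under routine bookkeeping.
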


The proof strategy is somewhat similar to the Theorem \ref{t-nonli-f}. Under the assumption of source function $f$, we can control the additional terms brought by the source function. Of course, compared to the case of homogeneous equations, this requires more precise estimation and better discrete upper and lower solutions.
Due to the detailed proof of Theorem \ref{uni-decay} on homogeneous equations, we will only provide an overview of the proof of this theorem here.

\begin{proof}
    From $f$ satisfying the decay estimate $|f_{n}|\leq t_{n}^{\beta}/(t_{n}+1)^{\alpha+\beta}$, we know that 
    \begin{equation*}
        -\frac{Kt_{n}^{\beta}}{(t_{n}+1)^{\alpha+\beta}}\leq D_{\tau}^{\alpha}(\varphi_{n})+\nu t_{n}^{\beta}\varphi_{n}^{\gamma}\leq\frac{Kt_{n}^{\beta}}{(t_{n}+1)^{\alpha+\beta}}.
    \end{equation*}
Our current strategy is to construct appropriate discrete sub solution $\{u_{n}\}$ and sup solution $\{w_{n}\}$ such that
    \begin{equation}\label{subsup}
        D_{\tau}^{\alpha}(u_{n})+\nu t_{n}^{\beta}u_{n}^{\gamma}\leq  -\frac{Kt_{n}^{\beta}}{(t_{n}+1)^{\alpha+\beta}}\le D_{\tau}^{\alpha}(\varphi_{n})+\nu t_{n}^{\beta}\varphi_{n}^{\gamma}\leq \frac{Kt_{n}^{\beta}}{(t_{n}+1)^{\alpha+\beta}} \le D_{\tau}^{\alpha}(w_{n})+\nu t_{n}^{\beta}w_{n}^{\gamma}.
    \end{equation}

\textbf{I. Discrete subsolution.}  Define
    \begin{equation*}
\theta:=\frac{2\varphi_{0}^{\gamma}\nu \Gamma(1+\alpha+\beta)}{C_{3}},~C_{10}=2^{-1}\varphi_{0}t_{n_{3}}^{(\alpha+\beta)/\gamma},~ g_{n}=\frac{t_{n}^{\alpha+\beta}}{\Gamma(1+\alpha+\beta)},
    \end{equation*}
where $t_{n_{3}}:=\min \left\{t_n: t_n\ge \left(\frac{C_{3}\varphi_{0}^{1-\gamma}}{4\nu }\right)^{1/(\alpha+\beta)} \right\}$ and $\varphi_{0}^{\gamma}:=\max\left\{K/\nu,\left[2K/C_{3}(1-2^{-(\gamma+1)})\right]^{1/\gamma}\right\}$.
    Then define the monotonically decreasing sequence
    \begin{equation}\label{vn-f}
        u_{n}=\left\{\begin{aligned}
                  &\varphi_{0}-\theta g_{n}~~for~n=0,1,\dots,n_{3},\\
                  &C_{10}t_{n}^{-(\alpha+\beta)/\gamma}~~for~n\geq n_{3}+1.
                 \end{aligned}\right.
    \end{equation}
    
    For $n\le n_{3}$,  we have
    \begin{equation}\label{sub-1}
        \begin{aligned}
            D_{\tau}^{\alpha}(u_{n})+\nu t_{n}^{\beta}u_{n}^{\gamma}&\leq -t_{n}^{\beta}\left(\frac{\theta C_{3}}{\Gamma(1+\alpha+\beta)}-\nu(\varphi_{0}-\theta g_{n})^{\gamma}\right)\\
            &\leq  -t_{n}^{\beta}\left(\frac{\theta C_{3}}{\Gamma(1+\alpha+\beta)}-\nu\varphi_{0}^{\gamma}\right)
            \leq -Kt_{n}^{\beta}\leq -\frac{Kt_{n}^{\beta}}{(t_{n}+1)^{\alpha+\beta}}.
        \end{aligned}  
    \end{equation}

  For $n>n_{3}$, by the definition of $t_{n_{3}}$, $C_{10}$ and $\theta$ we have
    \begin{equation}\label{sub-2}
        \begin{aligned}
            D_{\tau}^{\alpha}(u_{n})+\nu t_{n}^{\beta}u_{n}^{\gamma}&\leq -\frac{\theta C_{3}}{\Gamma(1+\alpha+\beta)}t_{n_{3}}^{\alpha+\beta}t_{n}^{\alpha}+\nu t_{n}^{\beta}C_{10}^{\gamma}t_{n}^{-(\alpha+\beta)}\\
            &=-t_{n}^{\beta}t_{n}^{-(\alpha+\beta)}\left(\frac{\theta C_{3}}{\Gamma(1+\alpha+\beta)}t_{n_{3}}^{\alpha+\beta}-\nu C_{10}^{\gamma}\right)
            \leq-\frac{t_{n}^{\beta}}{(t_{n}+1)^{\alpha+\beta}}\frac{C_{3}\varphi_{0}}{2}(1-2^{-(\gamma+1)}).
        \end{aligned}  
    \end{equation}
Noting that $\varphi_{0}^{\gamma}\geq\left[2K/C_{3}(1-2^{-(\gamma+1)})\right]^{1/\gamma}$, we get that
$D_{\tau}^{\alpha}(u_{n})+\nu t_{n}^{\beta}u_{n}^{\gamma}\leq -\frac{Kt_{n}^{\beta}}{(t_{n}+1)^{\alpha+\beta}}.$

\vskip0.2cm
    \textbf{II. Discrete supersolution.} Define the function
    \begin{equation}\label{wn-f}
    w(t)=\left\{\begin{aligned}
                 &\varphi_{0}~~for~0\leq t\leq t_{n_{4}},\\
                 &\varphi_{0}t_{n_{4}}^{\frac{\alpha+\beta}{\gamma}}t^{-\frac{\alpha+\beta}{\gamma}}~~for~t_{n_{4}}<t<\infty,
              \end{aligned}\right.
    \end{equation}
    where $t_{n_{4}}:=\min\left\{t_n: t_n\geq \left[\frac{C_{4}\varphi_{0}^{1-\gamma}2^{\alpha}}{\nu}\left(\frac{(\alpha+\beta)}{\gamma(1-\alpha)}2^{\frac{\alpha+\beta}{\gamma}}+1\right)+1\right]^{1/(\alpha+\beta)}\right\}$. Then we define the sequence $w_{n}=w(t_n)$ for $n\geq 0$.
      
    For $1\leq n \leq n_{4}$, we have $\varphi_{0}^{\gamma}\geq K/\nu$ and
    $ 
    D_{\tau}^{\alpha}(w_{n})+\nu t_{n}^{\beta}w_{n}^{\gamma}= \nu t_{n}^{\beta}\varphi_{0}^{\gamma}\geq Kt_{n}^{\beta}\geq \frac{Kt_{n}^{\beta}}{(t_{n}+1)^{\alpha+\beta}}.
    $

    Next, suppose that $n_{4}<n<2n_{4}$, one has 
    \begin{equation}\label{sup-2}
    \begin{aligned}
        D_{\tau}^{\alpha}(w_{n})+\nu t_{n}^{\beta}w_{n}^{\gamma}&\geq \frac{-\nu C_{4}\varphi_{0}^{1-\gamma}(\alpha+\beta)}{(1-\alpha)\gamma}\varphi_{0}^{\gamma}t_{n}^{-\alpha}+\nu t_{n}^{\beta}\varphi_{0}^{\gamma}t_{n_{4}}^{\alpha+\beta}t_{n}^{-(\alpha+\beta)}\\
        &\geq \nu t_{n}^{\beta}\varphi_{0}^{\gamma}t_{n}^{-(\alpha+\beta)}\left(\frac{- C_{4}\varphi_{0}^{1-\gamma}(\alpha+\beta)}{(1-\alpha)\gamma}\varphi_{0}^{\gamma}+t_{n_{4}}^{\alpha+\beta}\right).
    \end{aligned}
    \end{equation}
    By the definition of $t_{n_{4}}$, we have $\frac{- C_{4}2^{\alpha}\varphi_{0}^{1-\gamma}(\alpha+\beta)}{(1-\alpha)\gamma}+t_{n_{4}}^{\alpha+\beta}\geq1$. Noting that $\varphi_{0}^{\gamma}\geq K/\nu$, we obtain $D_{\tau}^{\alpha}(w_{n})+\nu t_{n}^{\beta}w_{n}^{\gamma}\geq\frac{Kt_{n}^{\beta}}{(t_{n}+1)^{\alpha+\beta}}$.

    Finally, suppose that $n\geq 2n_{4}$.  We have
    \begin{equation}\label{sup-3}
    \begin{aligned}
        D_{\tau}^{\alpha}(w_{n})+\nu t_{n}^{\beta}w_{n}^{\gamma}&\geq -t_{n}^{-\alpha}\varphi_{0}C_{4}2^{\alpha}\left(\frac{(\alpha+\beta)}{\gamma(1-\alpha)}2^{\frac{\alpha+\beta}{\gamma}}+1\right)+\nu t_{n}^{\beta}\varphi_{0}^{\gamma}t_{n_{4}}^{\alpha+\beta}t_{n}^{-(\alpha+\beta)}\\
        &\geq \nu t_{n}^{\beta}\varphi_{0}^{\gamma}t_{n}^{-(\alpha+\beta)}\left[-\frac{C_{4}\varphi_{0}^{1-\gamma}2^{\alpha}}{\nu}\left(\frac{(\alpha+\beta)}{\gamma(1-\alpha)}2^{\frac{\alpha+\beta}{\gamma}}+1\right)+t_{n_{4}}^{\alpha+\beta}\right].
    \end{aligned}
    \end{equation}
It follows from the definition of $t_{n_{4}}$ that $-\frac{C_{4}\varphi_{0}^{1-\gamma}2^{\alpha}} {\nu}\left(\frac{(\alpha+\beta)}{\gamma(1-\alpha)}2^{\frac{\alpha+\beta}{\gamma}}+1\right)+t_{n_{4}}^{\alpha+\beta}\ge 1$. Hence, we get that 
$D_{\tau}^{\alpha}(w_{n})+\nu t_{n}^{\beta}w_{n}^{\gamma}\geq \frac{Kt_{n}^{\beta}}{(t_{n}+1)^{\alpha+\beta}}$ by the fact $\varphi_{0}^{\gamma}\geq K/\nu$.

   In summary, the above analysis indicates that the discrete sub solution $\{u_n\}$ and sup solution $\{w_n\}$ satisfy the inequality \eqref{subsup}. Hence, it follows from Theorem \ref{uni-decay} that $u_n\le \varphi_{n} \le w_n$, which leads to the required results by the construction of $\{u_n\}$ and  $\{w_n\}$.
  
\end{proof}

\subsection{Numerical Mittag-Leffler stability for time fractional PDEs} \label{sec:34}

Consider the time semi-discretization of the initial boundary value problem \eqref{pde_f} by $\mathcal{CM}$-preserving schemes 
\begin{equation}\label{Un}
    D_{\tau}^{\alpha}U_{n}+ \nu t_{n}^{\beta}\mathcal{N}(U_{n}(x))=f_{n}(x)~~in~\Omega,~U_{n}=0~on~\partial\Omega,~U_{0}=u_{0},
\end{equation}
where $U_{n}=U_{n}(x)$ is the approximation of $u(t_{n},\cdot)$ at $t_{n}=n\tau$. 
Now we hope to apply the estimation of fractional ODE obtained above and combine appropriate energy methods to establish the long-term optimal decay rate of the numerical solution $\{\|U_{n}\|_{L^{s}(\Omega)}\}$, that is, to establish a discrete version of Corollary \ref{c-decay-f-N}.

The following $L^{s}$-norm inequality of $\mathcal{CM}$-preserving schemes is crucial. It can be seen as a discrete version of the $L^{s}$-norm inequality \eqref{dt-inq}, which once again reveals the good structural preservation characteristics of $\mathcal{CM}$-preserving schemes.

\begin{lemma}\cite{CM-preserving}\label{num-dut}
    Assume that for each $n\geq 0$ the discrete solution $U_{n}\in L^{s}(\Omega)$ for some $s\in (1,\infty)$. Consider $\mathcal{CM}$-preserving scheme of $D_{\tau}^{\alpha}(\cdot)$ defined by \eqref{coe-pro1}. Then for any $n\geq1$, there holds
        \begin{equation}\label{disdt-inq}
        \|U_{n}\|_{L^{s}(\Omega)}^{s-1}D_{\tau}^{\alpha}\|U_{n}\|_{L^{s}(\Omega)}\leq \int_{\Omega}(U_{n}(x))^{s-1}D_{\tau}^{\alpha}(U_{n})(x)dx.
    \end{equation}
\end{lemma}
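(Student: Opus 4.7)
The plan is to exploit the two structural properties of $\mathcal{CM}$-preserving weights — the sign pattern $\omega_{0}>0$, $\omega_{k}<0$ for $k\geq 1$, together with the conservation identity $\sum_{k=0}^{\infty}\omega_{k}=0$ in \eqref{coe-pro1} — to rewrite $D_{\tau}^{\alpha}$ as a non-negative combination of backward differences, and then to reduce the claimed inequality to a termwise application of H\"older's inequality.

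The first key step is a rearrangement. Using $\omega_{0}=\sum_{k=1}^{\infty}|\omega_{k}|$ in the definition \eqref{dis-caputo} and setting $r_{n}:=\sum_{k=n+1}^{\infty}|\omega_{k}|\geq 0$, one checks by direct computation that
\begin{equation*}
    D_{\tau}^{\alpha}(U_{n})=\frac{1}{\tau^{\alpha}}\left[\sum_{k=1}^{n}|\omega_{k}|\,(U_{n}-U_{n-k})+r_{n}(U_{n}-U_{0})\right],
\end{equation*}
and the identical identity holds for the real sequence $a_{n}:=\|U_{n}\|_{L^{s}(\Omega)}$ in place of $U_{n}$. The virtue of this representation is that every coefficient is non-negative.

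Next I would multiply the $a_{n}$-identity by $a_{n}^{s-1}$, multiply the $U_{n}$-identity pointwise by $U_{n}^{s-1}\geq 0$ and integrate over $\Omega$, and then subtract. The integrals $\int_{\Omega}U_{n}^{s}\diff x=a_{n}^{s}$ cancel inside each summand, so the difference $\int_{\Omega}U_{n}^{s-1}D_{\tau}^{\alpha}(U_{n})\diff x - a_{n}^{s-1}D_{\tau}^{\alpha}(a_{n})$ collapses into a linear combination, with the non-negative weights $|\omega_{k}|/\tau^{\alpha}$ and $r_{n}/\tau^{\alpha}$, of terms of the form
\begin{equation*}
    \int_{\Omega}U_{n}^{s-1}(U_{n}-U_{m})\diff x - a_{n}^{s-1}(a_{n}-a_{m})=a_{n}^{s-1}a_{m}-\int_{\Omega}U_{n}^{s-1}U_{m}\diff x,
\end{equation*}
for $m=n-k$ with $1\le k\le n$, or $m=0$. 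Each such quantity is non-negative by H\"older's inequality with conjugate exponents $s/(s-1)$ and $s$, which gives $\int_{\Omega}U_{n}^{s-1}U_{m}\diff x\leq \|U_{n}\|_{L^{s}}^{s-1}\|U_{m}\|_{L^{s}}=a_{n}^{s-1}a_{m}$. Hence the entire difference is non-negative, which is exactly \eqref{disdt-inq}.

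The main obstacle, conceptually, is the reformulation step: it is precisely the $\mathcal{CM}$-preserving sign/conservation structure that allows the signed weights $\{\omega_{k}\}$ to be repackaged as non-negative coefficients multiplying backward differences $U_{n}-U_{n-k}$. Without this, Hölder would have to be applied against terms of mixed signs and the inequality would fail. Everything downstream — the cancellation of $a_{n}^{s}$ and $\int U_{n}^{s}\diff x$, and the termwise H\"older estimate — is routine once the representation is in hand, so the lemma is genuinely a structural consequence of the $\mathcal{CM}$-preserving framework rather than of any finer property of the specific scheme.
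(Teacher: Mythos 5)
Your argument is correct: the rearrangement $D_{\tau}^{\alpha}(U_{n})=\tau^{-\alpha}\bigl[\sum_{k=1}^{n}|\omega_{k}|(U_{n}-U_{n-k})+r_{n}(U_{n}-U_{0})\bigr]$ follows exactly from properties (i)--(ii) of \eqref{coe-pro1}, and the termwise H\"older step (which uses the nonnegativity of $U_{n}$, consistent with how the lemma is applied) cleanly yields \eqref{disdt-inq}. The paper itself states this lemma without proof, citing the reference; your proof is the standard argument used there, so there is nothing to add beyond noting that the nonnegativity assumption on the $U_{n}$ should be made explicit for $U_{n}^{s-1}$ and $\int_{\Omega}U_{n}^{s}\diff x=\|U_{n}\|_{L^{s}}^{s}$ to make sense.
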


\begin{theorem}\label{t42}
    Let $U_{n}$ be the nonnegative semi-discretization weak solution of \eqref{Un} for each $n\geq1$.
    The parameters and $f$ satisfy the exact same assumptions as in Corollary \ref{c-decay-f-N}.
    Then there exists $\breve{C}>0$ such that
 \[
 \|U_{n}\|_{L^{s}(\Omega)}\leq \frac{\breve{C}}{1+t_{n}^{\frac{\alpha+\beta}{\gamma}}}~for~n=1,2,\dots.
 \]
\end{theorem}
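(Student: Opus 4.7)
The plan is to reduce the PDE estimate to a scalar discrete fractional ODE inequality for $\varphi_n := \|U_n\|_{L^s(\Omega)}$ and then compare against the supersolution built in the proof of Theorem \ref{dis-nonli-f}. This mirrors, at the discrete level, the argument used for Corollary \ref{c-decay-f-N} in the continuous setting.

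First I test the semi-discrete equation \eqref{Un} pointwise against $U_n^{s-1}$ and integrate over $\Omega$, which is legitimate since $U_n$ is nonnegative and lies in $L^s(\Omega)$. Three ingredients then produce the desired scalar inequality: the discrete $L^s$-chain-rule bound of Lemma \ref{num-dut} controls the fractional-derivative term from below by $\varphi_n^{s-1} D_\tau^\alpha \varphi_n$; the structural assumption \eqref{str-ass1} bounds $\int_\Omega U_n^{s-1} \mathcal{N}(U_n)\,dx$ from below by $C_N^{-1}\varphi_n^{s-1+\gamma}$; and Hölder's inequality combined with the source decay \eqref{f_esti} bounds $\int_\Omega f_n U_n^{s-1}\,dx$ from above by $K t_n^\beta (t_n+1)^{-(\alpha+\beta)}\varphi_n^{s-1}$. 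On the set where $\varphi_n>0$ I divide by $\varphi_n^{s-1}$ to obtain
\begin{equation*}
D_\tau^\alpha \varphi_n + \tilde\nu\, t_n^\beta \varphi_n^\gamma \;\leq\; \frac{K\, t_n^\beta}{(t_n+1)^{\alpha+\beta}}, \qquad \tilde\nu := \nu/C_N,
\end{equation*}
while on the set where $\varphi_n=0$ the claimed bound is automatic.

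To conclude I invoke the supersolution $\{w_n\}$ constructed in Part II of the proof of Theorem \ref{dis-nonli-f}, with $\nu$ replaced by $\tilde\nu$ and with $w_0$ chosen to dominate $\varphi_0=\|u_0\|_{L^s(\Omega)}$ (enlarging $w_0$, if needed, so that the standing requirement $w_0^\gamma \geq K/\tilde\nu$ of that construction is met). That supersolution satisfies $D_\tau^\alpha w_n + \tilde\nu\, t_n^\beta w_n^\gamma \geq K t_n^\beta/(t_n+1)^{\alpha+\beta}$ together with $w_n \leq \breve{C}/(1+t_n^{(\alpha+\beta)/\gamma})$. Subtracting the two inequalities cancels the common source term and yields $D_\tau^\alpha(\varphi_n-w_n) + \tilde\nu\, t_n^\beta(\varphi_n^\gamma - w_n^\gamma) \leq 0$, which is exactly the form handled by the indicator-function truncation used in the proof of Lemma \ref{dis-com-prin}. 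Since $z\mapsto z^\gamma$ is monotone on $\mathbb{R}_{\geq 0}$ and $\varphi_0 \leq w_0$, that truncation argument gives $\varphi_n \leq w_n$ for all $n\geq 0$, which is the asserted decay estimate.

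The main obstacle I anticipate is not a single hard step but rather the careful bookkeeping needed to invoke Lemma \ref{dis-com-prin} in the inhomogeneous setting: one must check that the cancellation of the source terms reduces matters to the purely homogeneous comparison principle and that the monotonicity of the effective nonlinearity is preserved throughout. A secondary subtlety is justifying the pointwise test against $U_n^{s-1}$ at the regularity level of a weak semi-discrete solution; this is covered by the standing assumption $U_n \in L^s(\Omega)$ and the fact that $D_\tau^\alpha$ acts as a finite linear combination of time slices rather than a continuous operator.
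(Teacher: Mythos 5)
Your proposal is correct and follows essentially the same route as the paper: test against $U_n^{s-1}$, combine Lemma \ref{num-dut}, the structural assumption \eqref{str-ass1}, and H\"older with \eqref{f_esti} to reduce to a scalar discrete fractional inequality for $\varphi_n=\|U_n\|_{L^s(\Omega)}$, then conclude via the supersolution of Theorem \ref{dis-nonli-f}. Your added care in observing that $\varphi_n$ satisfies only a one-sided inequality (so one should invoke the supersolution construction together with Lemma \ref{dis-com-prin} rather than the statement of Theorem \ref{dis-nonli-f} verbatim, and track the constant $\nu/C_N$) is a worthwhile precision that the paper glosses over.
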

\begin{proof}
 As in Corollary \ref{c-decay-f-N}, We multiply the equation \eqref{Un} both sides by $U_{n}^{s-1}(x)$ and integrate over $\Omega$ to get
    \begin{equation*}
        \int_{\Omega}U_{n}^{s-1}(x)D_{\tau}^{\alpha}U_{n}(x)dx+\int_{\Omega}\nu t_{n}^{\beta}\mathcal{N}(U_{n}(x))U_{n}^{s-1}(x)dx = \int_{\Omega}f_{n}(x)U_{n}^{s-1}(x)dx.
    \end{equation*}
The structural assumption \eqref{str-ass1} and the $L^{s}$-norm inequality \eqref{disdt-inq} imply that
    \begin{equation*}
        \|U_{n}\|_{L^{s}(\Omega)}^{s-1}D_{\tau}^{\alpha}\|U_{n}\|_{L^{s}(\Omega)}+\|U_{n}(x)\|_{L^{s}(\Omega)}^{s-1+\gamma}\leq  \int_{\Omega}f_{n}(x)U_{n}^{s-1}(x)dx.
    \end{equation*}
    By H$\mathrm{\Ddot{o}}$lder inequality and \eqref{f_esti}, we have
    \begin{equation*}
        \int_{\Omega}f_{n}(x)U_{n}^{s-1}(x)dx\leq \|f_{n}\|_{L^{s}(\Omega)}\|U_{n}(x)\|_{L^{s}(\Omega)}^{s-1}\leq \frac{Kt_{n}^{\beta}}{(t_{n}+1)^{\alpha+\beta}}\|U_{n}(x)\|_{L^{s}(\Omega)}^{s-1},
    \end{equation*}
    then $D_{\tau}^{\alpha}\|U_{n}\|_{L^{s}(\Omega)}+C_{N}\nu t_{n}^{\beta}\|U_{n}\|^{\gamma}_{L^{s}(\Omega)}\leq \frac{Kt_{n}^{\beta}}{(t_{n}+1)^{\alpha+\beta}}$.
   Next, let $\varphi_{n};=\|U_{n}\|_{L^{s}(\Omega)}$ and apply Theorem \ref{dis-nonli-f}, we can obtain the required results.
    \end{proof}

\section{Numerical experiments}\label{sec5}
In this section, we present some numerical experiments to confirm our theoretical analysis. 
Assume that the numerical solutions has the long-time decay rate $\|U_{n}\|_{L^{s}(\Omega)}=O\left( t_{n}^{(\alpha+\beta)/\gamma} \right)$ as $t_{n}\to \infty$.
Then we can define the the index function  as
\begin{equation}\label{q_alpha}
    r_{\alpha}^{*}(t_{n})=-\frac{\ln(\|U_{n}\|_{L^{s}(\Omega)}/\|U_{n-m}\|_{L^{s}(\Omega)})}{\ln(t_{n}/t_{n-m})}~~for~m\in \mathbb{N}^{+},
\end{equation}
which is the numerical observation decay rate approximates the theoretical predictions decay rate with order $\frac{\alpha+\beta}{\gamma}$. See more details 
in \cite{CM-preserving}.

\subsection{Nonlinear F-ODEs with time-dependent coefficients}
In this test, consider the scalar model \eqref{fode}: $D_{t}^{\alpha}y(t)+\nu t^{\beta}y(t)^{\gamma}=0$ with initial value $y(0)=y_{0}>0$ and $\beta>-\alpha$, which has an optimal numerical decay rates $O\left( t_{n}^{(\alpha+\beta)/\gamma} \right)$ predicted by Theorem \ref{uni-decay}.

Here we take $m=5$, $u_{0}=5$ and the parameter $\gamma=\{1/4,1/2,1,2,4\}$. Table \ref{tableexam1} and Table \ref{tableexam2} respectively show the polynomial decay rate of the numerical solution computed by Gr\"unwald-Letnikov scheme for various parameters of $\alpha$ and $\beta$.
We also plot the log-log relation of the numerical solution $U_{n}$ and $t_{n}$ in Figure \ref{f1} with various parameters.
It can be seen from those data and Figure that the numerical solutions are monotone and positive and are completely consistent with our theoretical prediction, i.e. $r_{\alpha}^{*}(t_{n})\sim\frac{\alpha+\beta}{\gamma}$.

\begin{table}[H]
\caption{Observed $r^{*}_{\alpha}(t_{n})$ for $t_{n}=2000,4000,6000,8000,10000$ with $\beta=0.3$, $\alpha=0.4$ }
\begin{center}
\begin{tabular}{cccccccc }
\hline $t_n$  & $\gamma=1/4$ & $\gamma=1/2$  & $\gamma=1$ & $\gamma=2$ & $\gamma=4$\\
\hline
$2000$  &2.801315 &1.400861 &0.700384 &0.348185 &0.172802\\
$4000$  &2.800657 &1.400434 &0.700247 &0.348575 &0.173056\\
$6000$  &2.800438 &1.400291 &0.700191 &0.348764 &0.173191\\
$8000$  &2.800328 &1.400219 &0.700158 &0.348883 &0.173282\\
$10000$ &2.800263 &1.400176 &0.700137 &0.348967 &0.173348\\
\hline
$r_{\alpha}^{*}$ &2.8 &1.4 &0.7 &0.35 &0.175\\
\hline
\end{tabular}
\end{center}
\label{tableexam1}
\end{table}

\begin{table}[H]
\caption{Observed $r^{*}_{\alpha}(t_{n})$ for $t_{n}=2000,4000,6000,8000,10000$ with $\beta=-0.3$, $\alpha=0.8$}
\begin{center}
\begin{tabular}{cccccccc }
\hline $t_n$  & $\gamma=1/4$ & $\gamma=1/2$  & $\gamma=1$ & $\gamma=2$ & $\gamma=4$\\
\hline
$2000$  &2.001509 &1.001087 &0.502494 &0.250823 &0.124179\\
$4000$  &2.000754 &1.000559 &0.501724 &0.250673 &0.124242\\
$6000$  &2.000503 &1.000379 &0.501393 &0.250602 &0.124278\\
$8000$  &2.000378 &1.000288 &0.501199 &0.250556 &0.124303\\
$10000$ &2.000302 &1.000232 &0.501067 &0.250524 &0.124322\\
\hline
$r_{\alpha}^{*}$&2 &1 &0.5 &0.25 &0.125\\
\hline
\end{tabular}
\end{center}
\label{tableexam2}
\end{table}

\begin{figure}[!htb]
\begin{centering}
\begin{minipage}[c]{0.5\textwidth}
\centering\includegraphics[width=7.5cm,height=7cm]{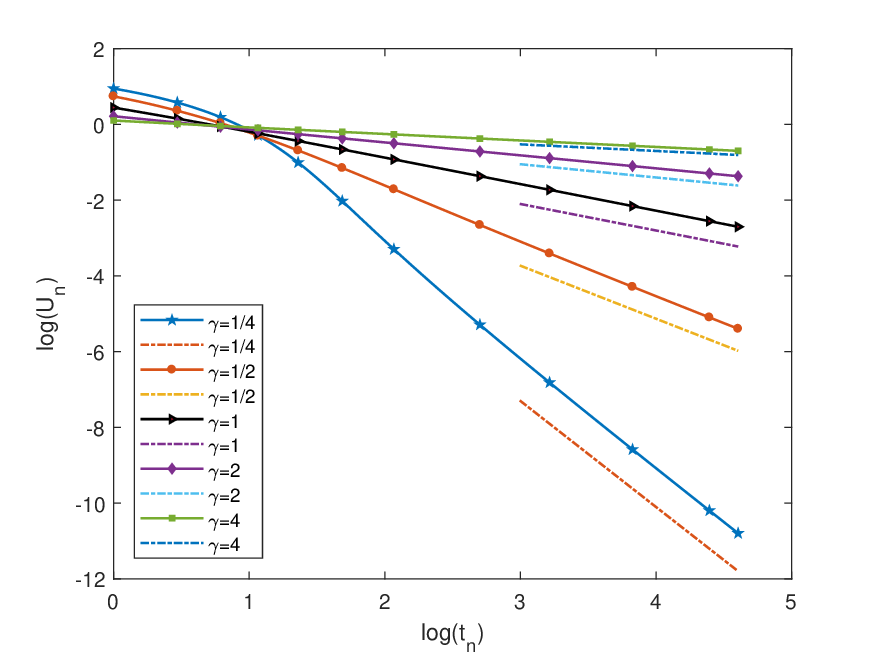}
\renewcommand{\figurename}{Figure.}
\end{minipage}
\begin{minipage}[c]{0.5\textwidth}
\centering\includegraphics[width=7.5cm,height=7cm]{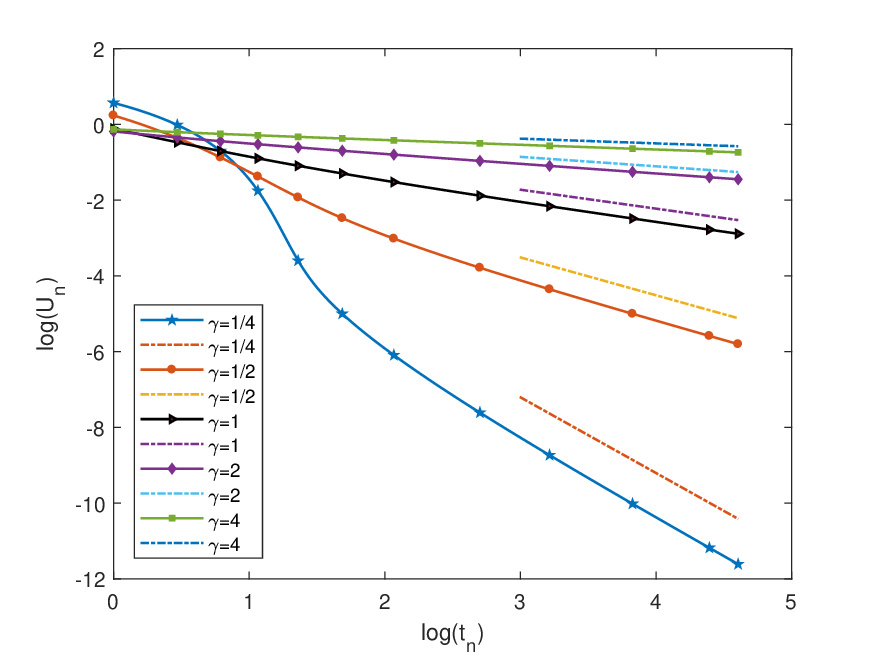}
\renewcommand{\figurename}{Figure.}
\end{minipage}
\end{centering}
\caption{ Logarithm of numerical solutions $\log(U_{n})$ for $\gamma=1/4,1/2,1,2,4$ with $\beta=0.3$, $\alpha=0.4$ (left) and $\beta=-0.3$, $\alpha=0.8$ (right). \label{f1}}
\end{figure}

\subsection{Time fractional PDEs with time-dependent coefficients}
Consider the time fractional partial differential equation:
\begin{equation}\label{pde_1}
    \left\{\begin{aligned}
        &D_{t}^{\alpha}u(t,x)-t^{\beta}\Delta u(t,x)=\frac{t^{\beta}}{(1+t)^{\alpha+\beta}}~~for~t>0~and~x\in \Omega,\\
        &u(t,x)=0~~for~t>0~and~x\in \partial\Omega,\\
        &u(0,x)=u_{0}(x)~~for~x\in \Omega,
    \end{aligned}\right.
\end{equation}
where $\mathcal{N}(u)=\Delta u$ is Laplace operator. One has the Poincar\'e inequality  $\|u(t,\cdot)\|_{L^{2}(\Omega)}\leq \kappa(\int_{\Omega}|\bigtriangledown u(t,\cdot)|^{2}dx)^{1/2}$ for some positive constant $\kappa>0$, i.e., the structural assumption \eqref{str-ass1} holds true for $s=2,~\gamma=1$. Therefore, we can obtain that the decay rate of the true solution is $O(t^{-(\alpha+\beta)})$.

In this example, we take $\Omega=[0,1]^{2}$ and the initial value $u_{0}(x_{1},x_{2})=10\sin (\pi x_{1})\sin (\pi x_{2})$.
The linear finite element in space and Gr\"unwald-Letnikov scheme in time are employed to solve the problem \eqref{pde_1} with time step size $\tau=0.1$. 
Table \ref{tablepde2_1} and Table \ref{tablepde2_2} presents the results of the decay rate for numerical solutions with $\beta=-0.3,~-0.1,~0.1,~0.3,~0.6$, $\alpha=0.5$ and $\beta=0.3$, $\alpha=0.1,~0.3,~0.5,~0.7,~0.9$ to problem \eqref{pde_1}. 
Both log-log picture presented in Figure \ref{2dpde} show the evolution of $V_{n}:=\|U_{n}\|_{L^{2}(\Omega)}$ has decay rate $t_{n}^{-(\alpha+\beta)}$,
which are also completely consistent with our theoretical prediction

\begin{table}[H]
\caption{Observed $r^{*}_{\alpha}(t_{n})$ for $t_{n}=20,60,100,140,180$ with $\beta=-0.3,~-0.1,~0.1,~0.3,~0.6$, and $\alpha=0.5$}
\begin{center}
\begin{tabular}{cccccccc }
\hline $t_n$  & $\beta=-0.3$ & $\beta=-0.1$  & $\beta=0.1$ & $\beta=0.3$ & $\beta=0.6$\\
\hline
$20$  &0.203591~ &0.403624~ &0.603672~ &0.803713~ &1.103757~\\
$60$  &0.201189~ &0.401221~ &0.601252~ &0.801270~ &1.101281~\\
$100$  &0.200701~ &0.400732~ &0.600756~ &0.800768~ &1.100773~\\
$140$  &0.200491~ &0.400521~ &0.600542~ &0.800551~ &1.100554~\\
$180$  &0.200375~ &0.400404~ &0.600423~ &0.800430~ &1.100432~\\
\hline
$r_{\alpha}^{*}$&0.2~ &0.4~ &0.6~ &0.8~ &1.1~\\
\hline
\end{tabular}
\end{center}
\label{tablepde2_1}
\end{table}

\begin{table}[H]
\caption{Observed $r^{*}_{\alpha}(t_{n})$ for $t_{n}=20,60,100,140,180$ with $\beta=0.3$, and $\alpha=0.1,~0.3,~0.5,~0.7,~0.9$}
\begin{center}
\begin{tabular}{cccccccc }
\hline $t_n$  & $\alpha=0.1$ & $\alpha=0.3$  & $\alpha=0.5$ & $\alpha=0.7$ & $\alpha=0.9$\\
\hline
$20$  &0.400381~ &0.601848~ &0.803713~ &1.005938~ &1.208512~\\
$60$  &0.400083~ &0.600622~ &0.801270~ &1.002020~ &1.202882~\\
$100$  &0.400029~ &0.600372~ &0.800768~ &1.001218~ &1.201734~\\
$140$  &0.400008~ &0.600265~ &0.800551~ &1.000872~ &1.201240~\\
$180$ &0.400000~ &0.600205~ &0.800430~ &1.000679~ &1.200965~\\
\hline
$r_{\alpha}^{*}$&0.4~ &0.6~ &0.8~ &1~ &1.2~\\
\hline
\end{tabular}
\end{center}
\label{tablepde2_2}
\end{table}

\begin{figure}[!htb]
\begin{centering}
\begin{minipage}[c]{0.45\textwidth}
\centering\includegraphics[width=8cm,height=7cm]{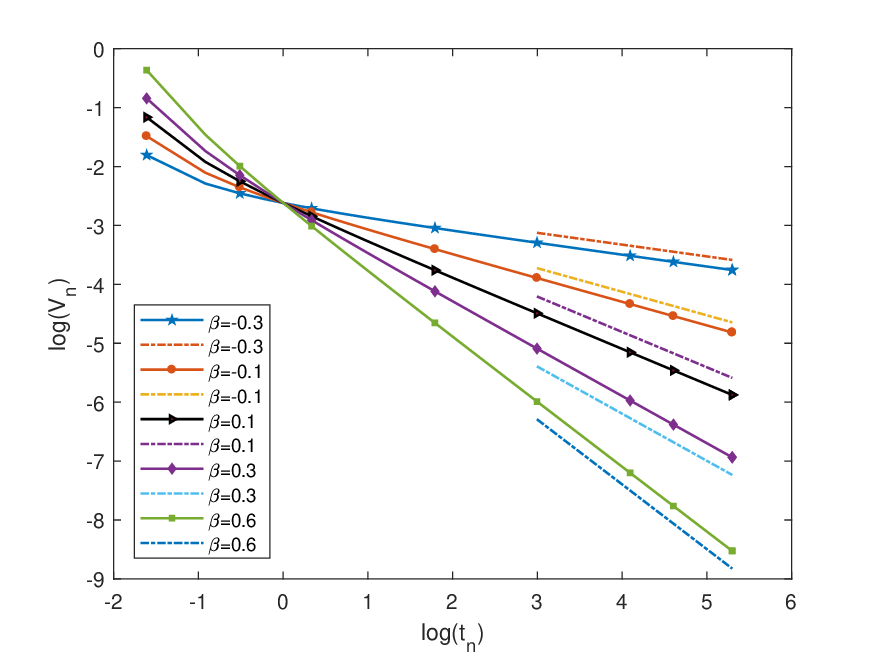}
\renewcommand{\figurename}{Figure.}
\end{minipage}
\begin{minipage}[c]{0.45\textwidth}
\centering\includegraphics[width=8cm,height=7cm]{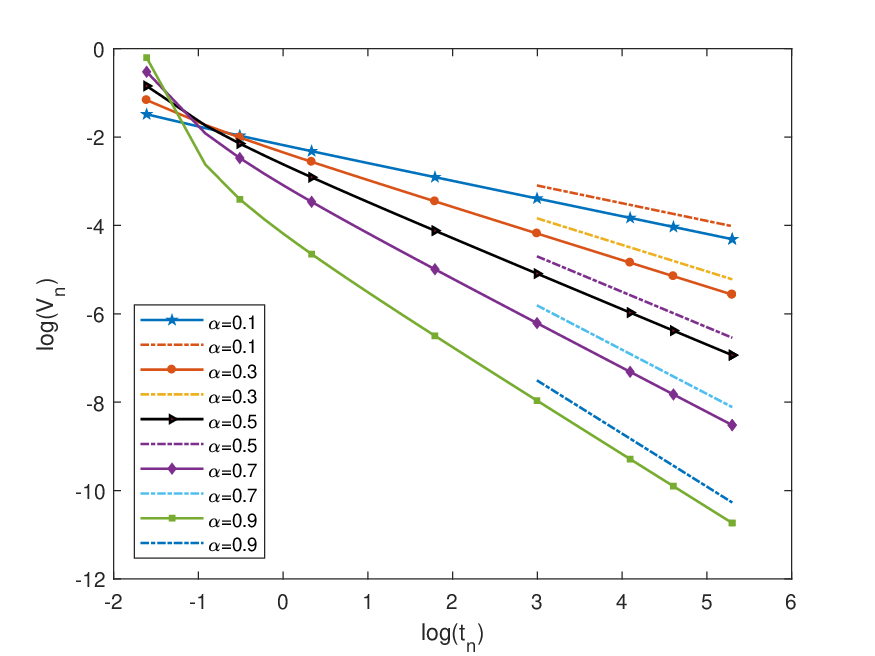}
\renewcommand{\figurename}{Figure.}
\end{minipage}
\end{centering}
\caption{ Logarithm of numerical solutions $\log(V_{n})$ for $\beta=-0.3,-0.1,~0.1,~0.3,~0.6$, $\alpha=0.5$ (left) and $\beta=0.3$, $\alpha=0.1,~0.3,~0.5,~0.7,~0.9$ (right). \label{2dpde}}
\end{figure}

\bibliographystyle{plain}
\bibliography{CM}
\end{document}